\tikzset{
    buffer/.style={
        draw,
        regular polygon,
        regular polygon sides=3,
        fill=red,
        node distance=1cm,
        minimum height=3em
    }
}
\newcommand{\norm}[1]{\left\lVert {#1} \right\rVert}
\newcommand{\C}{{\mathbb{C}}}
\newcommand{\N}{{\mathbb{N}}}
\newcommand{\sA}{{\mathcal{A}}}
\newcommand{\sB}{{\mathcal{B}}}
\newcommand{\sP}{{\mathcal{P}}}
\newtheorem{theorem}{Theorem}
\newtheorem{conjecture}{Conjecture}
\newtheorem{prop}{Proposition}
\newtheorem{lemma}{Lemma}
\newtheorem{question}{Question}
\newtheorem*{theorem*}{Theorem}
\theoremstyle{definition}
\newtheorem{definition}{Definition}
\theoremstyle{remark}
\newtheorem{remark}{Remark}
\author{Jennifer Brooks}
\address{Department of Mathematics, Brigham Young University, Provo, UT, 84602}
\email{jbrooks@mathematics.byu.edu}
\author{Dusty Grundmeier}
\address{Department of Mathematics, Harvard University, Cambridge, MA, 02138}
\email{deg@math.harvard.edu}
\title{Sum of Squares Conjecture: the Monomial Case in $\mathbb{C}^3$}
\begin{document}

\maketitle

\begin{abstract}
The goal of this article is to prove the Sum of Squares Conjecture for real polynomials $r(z,\bar{z})$ on $\mathbb{C}^3$ with diagonal coefficient matrix. This conjecture describes the possible values for the rank of $r(z,\bar{z}) \norm{z}^2$ under the hypothesis that $r(z,\bar{z})\norm{z}^2=\norm{h(z)}^2$ for  some holomorphic polynomial mapping $h$.  Our approach is to connect this problem to the degree estimates problem for proper holomorphic monomial mappings from the unit ball in $\mathbb{C}^2$ to the unit ball in $\mathbb{C}^k$.  
D'Angelo, Kos, and Riehl proved the sharp degree estimates theorem in this setting, and we give a new proof using techniques from commutative algebra.  We then complete the proof of the Sum of Squares Conjecture in this case using similar algebraic techniques.
\end{abstract}

\section{\label{sec:intro} Introduction}

Let $r(z, \bar{z})$ be a real polynomial on the diagonal of $\mathbb{C}^n \times \mathbb{C}^n$ and suppose there is a holomorphic polynomial mapping $h$ on $\mathbb{C}^n$ such that $r(z,\bar{z}) \norm{z}^2 = \norm{h(z)}^2$.  The rank of a real polynomial is the rank of its matrix of coefficients. The goal of this paper is to study the possible ranks $\rho$ of the real polynomial $\norm{h}^2$. In particular, we study the following conjecture of Ebenfelt (see \cite{E:17}) and its connections with other classical problems in several complex variables.

\begin{conjecture}[Sum of Squares (SOS) Conjecture \cite{E:17}] \label{thm:sos}
Suppose $n \geq 2$, and define
\begin{equation}\label{eq:k0}
k_0 = \max\left\{k \in \N_0: \frac{k(k+1)}{2} < n-1\right\}.
\end{equation}
Let $r(z,\bar{z})$ be a real polynomial on the diagonal of $\mathbb{C}^n \times \mathbb{C}^n$, and suppose that $r(z,\bar{z}) \norm{z}^2$ is a squared norm, i.e.,
\begin{equation*}
r(z,\bar{z})\norm{z}^2=\norm{h(z)}^2.
\end{equation*}
Let $\rho$ be the rank of $\norm{h}^2$. Then either
\begin{equation*}
\rho \geq (k_0 +1)n -\frac{k_0(k_0+1)}{2},
\end{equation*}
 or there exists an integer $0\leq k \leq k_0 <n$ such that
\begin{equation*}
n k -\frac{k(k-1)}{2} \leq \rho \leq n k.
\end{equation*}
\end{conjecture}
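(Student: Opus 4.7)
The plan is to reduce Conjecture~\ref{thm:sos} to a degree-estimates problem for proper holomorphic polynomial maps between balls, generalizing the reduction that underlies the monomial case. First, I would normalize $h$ by a unitary change of coordinates in the target so that its components are $\C$-linearly independent; the rank $\rho$ then equals the number of components. Polarizing the identity $r(z,\bar z)\norm{z}^2 = \norm{h(z)}^2$ yields an identity $r(z,w)\cdot(z_1 w_1 + \cdots + z_n w_n) = \sum_{j=1}^{\rho} h_j(z)\overline{h_j(\bar w)}$ in $\C[z,w]$. This forces the ideal $I_h = (h_1,\ldots,h_\rho) \subset \C[z_1,\ldots,z_n]$ to sit inside the maximal ideal $(z_1,\ldots,z_n)$ and to satisfy strong divisibility constraints with respect to $\norm{z}^2$. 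The rank $\rho$ should then be reinterpreted as the dimension of the graded piece of a carefully chosen quotient of $\C[z]/I_h$.

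Next, I would classify $h$ by a \emph{type} $k \in \{0,1,\ldots,n-1\}$ that measures, after a suitable linear change of coordinates, the codimension of the ``base locus'' along which the components of $h$ vanish to a common order. For each $k \leq k_0$, the allowed range $nk - k(k-1)/2 \leq \rho \leq nk$ should be realized by explicit constructions obtained from D'Angelo's tensor-product and juxtaposition operations applied to simpler maps, following the pattern in which tensoring against $z_j$ adds $n-1$ to the rank while preserving the SOS identity. The gaps $nk < \rho < (k+1)n - k(k+1)/2$ are then to be excluded by a Hilbert-function argument: if $\rho$ fell in a gap, the graded Hilbert function of $\C[z]/I_h$ would violate the constraints imposed by $\norm{z}^2 \mid \norm{h}^2$ together with degree estimates of D'Angelo--Kos--Riehl type for proper monomial maps $B^{n-1} \to B^N$ (which are implicit when one slices by a generic hyperplane in $\C^n$).

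The main obstacle will be passing from the monomial situation to general $h$. In the monomial case, one can exploit Gr\"obner basis and initial ideal techniques to reduce $I_h$ to a combinatorially tractable monomial ideal with sharp control over the Hilbert function. For general $h$, the natural degeneration $h \rightsquigarrow \init(h)$ need not preserve the rank $\rho$: $\C$-linear dependencies among monomials of $\init(h)$ can destroy dependencies present in $h$ itself. Controlling $\rho$ under such a degeneration—perhaps via a semicontinuity argument on the Hilbert series of a bigraded Rees-type algebra attached to the pair $(h, \norm{z}^2)$—is the delicate step I would need to carry out. A secondary obstacle is that the required sharp degree estimates for proper maps from $B^n$ to $B^N$ with $n \geq 3$ are themselves open beyond the monomial setting, so a complete resolution of Conjecture~\ref{thm:sos} in full generality will likely proceed in parallel with progress on those estimates rather than in isolation.
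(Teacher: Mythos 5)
The statement you are trying to prove is a conjecture, not a theorem of the paper: the paper itself establishes it only in special cases (the positive-definite case, signature pair $(P,0)$, via Macaulay's estimate, and the new result here, $n=3$ with diagonal coefficient matrix), so there is no proof in the paper against which your outline could be matched, and your proposal does not close that gap either. The most concrete problems are these. First, your plan to exclude the gaps $nk < \rho < (k+1)n - k(k+1)/2$ by a Hilbert-function argument on $\C[z]/I_h$ is essentially the argument that works only when $r$ is positive definite: there $\rho = H_{I_f}(d)$ and Macaulay's bound gives $M(P)\leq \rho \leq nP$ directly. As soon as the signature pair has $N\geq 1$, the rank is no longer the Hilbert function of a single ideal, the naive bound $\rho \geq M(P+N) - nN$ is too weak (the paper shows this explicitly for $n=3$, $N=2$, where it gives $3$ instead of the needed $5$), and even in the diagonal $n=3$ case the authors must pass to syzygies and graded Betti numbers of $I_f$, $I_g$, $I_{f\oplus g}$ plus a finite case analysis. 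Your outline contains no mechanism for handling the negative part of the holomorphic decomposition, which is where the actual difficulty lies; indeed the paper's remark observes that the gaps are expected to occur only in the positive-definite case, so the structure of the problem changes qualitatively once $N\geq 1$.

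Second, your reduction of general $h$ to the monomial situation via $h \rightsquigarrow \init(h)$ is not a step you can take for granted, and you say so yourself: the degeneration need not preserve either the rank $\rho$ or the defining identity $r\norm{z}^2 = \norm{h}^2$, and no semicontinuity statement of the kind you invoke (on a bigraded Rees-type algebra) is known to control $\rho$ in the required direction. This is precisely why the paper restricts to diagonal coefficient matrices, where the components are already monomials. Two further points: the part of your plan devoted to realizing all ranks in the intervals $[M(k), nk]$ by tensor-product constructions is not needed for the conjecture, which only asserts that $\rho$ lies in the union of these sets and the ray $\rho \geq M(k_0+1)$; and your appeal to ``degree estimates of D'Angelo--Kos--Riehl type'' obtained by slicing with a generic hyperplane is circular in spirit, since sharp degree estimates beyond the monomial setting are open, as you acknowledge. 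In short, the proposal is a research program with the hardest steps (the indefinite case and the non-monomial case) left open, not a proof.
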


In other words, not all natural numbers are possible ranks for $\norm{h}^2$. For instance, Huang's Lemma \cite{H1} gives either $\rho =0$ or $\rho \geq n$, and this proves the conjecture when $n=2$.

In \cite{GB}, we proved Conjecture \ref{thm:sos} if $r$ is itself positive definite using techniques from commutative algebra, in particular, Macaulay's estimate on the growth of a homogeneous ideal in a polynomial ring. In this paper, we prove the conjecture in the diagonal case in three variables.

\begin{theorem}\label{thm:SOS for diagonal r in 3 var}
Conjecture \ref{thm:sos} holds if $n=3$ and the coefficient matrix of $r$ is diagonal.
\end{theorem}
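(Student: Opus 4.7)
The plan is to convert the theorem into a purely algebraic assertion about polynomials in three variables and then to handle that assertion with techniques parallel to those used for the degree-estimates theorem. Since $r$ has a diagonal coefficient matrix, so does $r\|z\|^2=\|h\|^2$, and after a unitary change of coordinates on the target I may take $h$ to be a monomial mapping, in which case the rank $\rho$ equals the number of distinct monomials appearing in $h$. Setting $x_j=|z_j|^2$ identifies $r(z,\bar z)$ with a real polynomial $P(x_1,x_2,x_3)$ and $\|z\|^2$ with $\sigma_1:=x_1+x_2+x_3$, so the hypothesis becomes the polynomial identity
\[
    P(x)\,\sigma_1(x) \;=\; Q(x)\qquad\text{in }\R[x_1,x_2,x_3],
\]
where $Q$ has non-negative coefficients and exactly $\rho$ distinct monomial terms.

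For $n=3$ one computes $k_0=1$, so the conclusion is $\rho\in\{0,3\}\cup\{5,6,7,\ldots\}$, i.e.\ one must rule out $\rho\in\{1,2,4\}$. I decompose by total degree: $Q_d=P_{d-1}\,\sigma_1$ for each $d$, and each homogeneous piece $Q_d$ inherits non-negative coefficients from $Q$. Writing $\rho_d$ for the number of monomials of $Q_d$, one has $\rho=\sum_d\rho_d$. Since the set $\{0,3\}\cup\{5,6,7,\ldots\}$ is closed under addition and none of $1$, $2$, $4$ can be expressed as a sum of its elements, the theorem reduces to the homogeneous claim: every non-zero homogeneous polynomial of degree $d$ in three variables with non-negative coefficients that is divisible by $\sigma_1$ must have $\rho_d\in\{3\}\cup\{5,6,7,\ldots\}$.

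The cases $\rho_d=1$ and $\rho_d=2$ are handled quickly: $\sigma_1$ is irreducible in $\R[x_1,x_2,x_3]$ and divides no single monomial, while evaluating a hypothetical two-term $Q_d$ at $(1,\omega,\omega^2)$ for $\omega$ a primitive cube root of unity yields an equation $c_1\omega^{a}+c_2\omega^{b}=0$ with $c_1,c_2>0$, which forces the impossible relation $\omega^{a-b}=-c_2/c_1<0$. The main obstacle is $\rho_d=4$. I plan to pass to the quotient $\R[x_1,x_2,x_3]/(\sigma_1)\cong\R[x_1,x_2]$ via $x_3\mapsto -x_1-x_2$, converting divisibility of $Q_d$ by $\sigma_1$ into the vanishing identity
\[
    \sum_{i=1}^{4} c_i\,(-1)^{e_i}\,x_1^{a_i}\,x_2^{b_i}\,(x_1+x_2)^{e_i} \;=\; 0,
\]
with $c_i>0$ and $a_i+b_i+e_i=d$. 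Grouping the four summands by the value of $e_i$---equivalently, by the parity and magnitude of the power of $x_1+x_2$---yields a finite list of partition types to analyze, each of which reduces to a linear identity among binomial-weighted monomials along a single anti-diagonal in $\R[x_1,x_2]$. I would then exclude each configuration using a Macaulay-type bound on $\R[x_1,x_2]$, in the spirit of the commutative-algebra proof of the D'Angelo--Kos--Riehl degree estimates developed earlier in the paper. The hardest sub-case should be the mixed-parity split in which partial cancellations between the four expansions occur: ruling out a positive-coefficient solution to the resulting linear system is where the commutative-algebra bound must do the decisive work, and it is the step I anticipate being the most delicate part of the argument.
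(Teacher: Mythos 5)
Your reduction to the homogeneous case via additivity of $\rho$ over degrees and closure of $\{0,3\}\cup\{5,6,\ldots\}$ under addition is valid, and your disposal of $\rho_d=1$ (irreducibility of $\sigma_1$) and $\rho_d=2$ (evaluation at $(1,\omega,\omega^2)$ with $\omega$ a primitive cube root of unity) is correct and pleasantly slick. However, the proposal has a genuine gap precisely at the step you identify as the hardest: the exclusion of $\rho_d=4$ is never actually carried out. You describe only an intention---substitute $x_3\mapsto -x_1-x_2$, ``group by partition type,'' and invoke ``a Macaulay-type bound on $\R[x_1,x_2]$''---but no concrete argument is given. The plan as stated does not obviously terminate: the exponents $e_i$ in your identity $\sum_{i=1}^4 c_i(-1)^{e_i}x_1^{a_i}x_2^{b_i}(x_1+x_2)^{e_i}=0$ range up to $d$, so the ``finite list of partition types'' is not finite across all degrees unless a further reduction is made, and it is not clear what the Macaulay estimate (a bound on Hilbert function growth of ideals) would even be applied to in order to rule out a positive-coefficient solution of that linear system. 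The $\rho_d=4$ exclusion is the entire content of this theorem beyond what Huang-type results already give, and leaving it at the level of a sketch means the proof is incomplete.

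For comparison, the paper reaches the same goal ($\rho\geq 5$ once $r$ has any negative term) by an entirely different route, and the structure of that route suggests why a one-paragraph direct attack on $\rho_d=4$ is unlikely to go through. The paper first shows, via the Halfpap--Lebl lemma and Macaulay's bound applied to the ideals $I_f$, $I_g$, $I_{f\oplus g}$ together with careful syzygy counting in the minimal free resolution, that $\rho\geq 5$ whenever the number of negatives $N$ is $1$ or $2$ (Propositions~\ref{prop:N=1}, \ref{prop:N=2 and b=0}, \ref{prop:P0 for N=2 and b=1.}); it then proves a quantitative Lebl--Peters lower bound $\rho\geq (\pi(p)+5)/2$ in terms of the Betti numbers of $I_{f\oplus g}$, which disposes of $N\geq 3$ whenever the reduced degree is at least $5$; and it finishes $N\geq 3$ in low degree by a short but explicit enumeration over Newton diagrams. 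Each of these steps uses structure (signature pair, connectedness of the Newton diagram, syzygy degrees) that your substitution $x_3\mapsto -x_1-x_2$ discards. To salvage your approach you would need to produce, for each degree $d$, a proof that no four monomials in $\{x_1^{a}x_2^{b}x_3^{e}:a+b+e=d\}$ admit a positive linear combination lying in $(\sigma_1)$; that is a nontrivial combinatorial statement, and as written you have not established it.
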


It is not hard to show that it suffices to prove Conjecture 1 in the bihomogeneous case, and so for the remainder of the paper, we assume that $r$ is bihomogeneous of bi-degree $(d-1,d-1)$.

We will prove Theorem \ref{thm:SOS for diagonal r in 3 var} by relating it to another natural rank question.  Whereas Conjecture \ref{thm:sos} simply asks for the possible ranks of $r(z,\bar{z}) \norm{z}^2$ in the case in which it is the squared norm of a holomorphic polynomial mapping, one expects in general that the rank depends on the bi-degree $(d-1,d-1)$. Thus one might also ask the following:

\begin{question}
Let $r(z, \bar{z})$ be a real bihomogeneous polynomial on the diagonal of $\mathbb{C}^n \times \mathbb{C}^n$ of bi-degree $(d-1,d-1)$.  Let $\rho$ be the rank of the polynomial $r(z,\bar{z}) \norm{z}^2$.  Can we obtain a sharp lower bound on $\rho$ in terms of $n$ and $d$?
\end{question}

Without some additional hypotheses on $r$, the question is uninteresting because one can easily construct examples with a low rank but arbitrary degree.  With some mild hypotheses on $r$ to rule out these sorts of situations, one can obtain some interesting results.  Lebl and Peters \cite{LP:11,LP:12} have considered this question for $r$ having a diagonal coefficient matrix and for arbitrary $n$.  They show that when $n=3$, if the terms of $r$ have no common monomial factor of positive degree and if $r$ satisfies some ``connectedness" hypothesis to be made precise later, then
$$\rho \geq \frac{d+5}{2}. $$
They use this rank estimate to obtain a new proof of the sharp degree bounds for proper, monomial maps from the unit ball $B_2$ in $\mathbb{C}^2$ to the unit ball $B_k$ in $\mathbb{C}^k$. The original proof of the sharp degree estimate for proper holomorphic monomial maps from $B_2$ to $B_k$ is due to D'Angelo, Kos, and Riehl \cite{DKR:03}. 

Our approach in this paper is to use tools from commutative algebra to study these two related rank problems.  In particular, we consider several homogeneous ideals naturally associated with the bihomogeneous polynomial $r$.  Our rank estimates can be made in terms of the Hilbert functions and graded Betti numbers for these ideals. We give a new proof of the result of Lebl and Peters using this language.  We then use this theorem to obtain the desired lower bound on $\rho$ needed for Theorem \ref{thm:SOS for diagonal r in 3 var} for polynomials of high degree. For polynomials of low degree, the signature pair $(P,N)$ of $r$ plays a crucial role.  We treat this case in two steps.  First, we prove several general results about the rank of $r(z,\bar{z}) \norm{z}^2$ when the number $N$ is small. We are then left with a small number of remaining cases that can be easily analyzed.  We end the paper by discussing how our algebraic proof of the result of Lebl and Peters leads to a new proof of the sharp degree estimates theorem of D'Angelo, Kos, and Riehl for proper, monomial mappings from $B_2$ to $B_k$.

These results are part of a long-standing program in several complex variables to classify proper rational mappings from the unit ball in $\mathbb{C}^n$ to the unit ball in $\mathbb{C}^k$. See for instance \cite{DL:09, JPD(scv):93, JPD:16, FF:survey,DX, H1, E1, E:17} and their references. When the codimension is small, there is remarkable additional structure. For instance, when $n>2$ and $k<2n-1$, Faran \cite{JF:86} proved all proper holomorphic mappings, that extend smoothly to the boundary, from $B_n$ to $B_k$ are spherically equivalent to $z\mapsto (z,0)$. The intervals of codimensions where no new maps appear is called a gap. The Huang-Ji-Yin Gap Conjecture, stated in \cite{HJY:09}, completely classifies these gaps. Many researchers have made significant contributions to this line of research.  See especially \cite{H1,HJX,HJY1,E:17,E1}.  We will not discuss the Gap Conjecture in detail here, but refer the interested reader to \cite{E:17} for a concise overview and relevant references. For the purposes of this article, the relevant fact is that there is a connection between the Gap Conjecture and Hermitian sums of squares. More precisely, Ebenfelt \cite{E:17,E1} proved that his Sum of Squares Conjecture implies the Gap Conjecture.

\section{\label{sec:alg} The Commutative Algebra Framework}

If $r(z, \bar{z})$ is a real bihomogeneous polynomial on the diagonal of $\mathbb{C}^n \times \mathbb{C}^n$ of bi-degree $(d-1,d-1)$, it has a {\it holomorphic decomposition}
$$r(z, \bar{z}) = \norm{f(z)}^2 - \norm{g(z)}^2 $$
for homogeneous holomorphic polynomial mappings $f=(f_1,\ldots,f_P)$ and $g=(g_1,\ldots,g_N)$ of degree $d-1$. (See \cite{JPD:Carus} for an extensive discussion of this idea and numerous applications.) Although in general the holomorphic decomposition of a real polynomial is not unique, if we require the $f_j$ and $g_k$ to be linearly independent, the pair $(P,N)$ is uniquely determined and is called the signature pair of $r$.  If we write $r(z,\bar{z}) = \sum c_{a b} z^a \bar{z}^b$, then $(P,N)$ is the signature pair of the coefficient matrix $(c_{ab})$ and $P+N$ is its rank.

Let $R=\mathbb{C}[z_1,\ldots,z_n]$.  $R$ is a graded ring, graded by degree.
We consider three homogeneous ideals in $R$ naturally associated with $r(z,\bar{z})$.  Let $I_f = \langle f_1,\ldots,f_P \rangle$, $I_g=\langle g_1,\ldots,g_N \rangle$, and $I_{f \oplus g}=\langle f_1,\ldots,f_P,g_1,\ldots,g_N \rangle$.
For any finitely-generated graded $R$-module $M$, we may write $M = \oplus_\ell M_\ell$, where $M_\ell$ is the component of $M$ in degree $\ell$.  The component $M_\ell$ is a vector space over $\mathbb{C}$, and we define
\begin{equation*}
H_M(\ell)=\dim_{\mathbb{C}} M_\ell.
\end{equation*}
$H_M$ is the {\it Hilbert function} of $M$.

The condition that $r(z,\bar{z}) \norm{z}^2$ is a squared norm implies a relationship between the Hilbert functions $H_{I_f}$ and $H_{I_{f \oplus g}}$.
\begin{lemma}\label{lemma: containment of ideals for Pn1}
If $r(z,\bar{z})$ is a bihomogeneous polynomial of bi-degree $(d-1,d-1)$ and if $r(z,\bar{z}) \norm{z}^2 = \norm{h(z)}^2$, then $(I_f)_d = (I_{f \oplus g})_d$, and thus
\begin{equation*}\label{eq:equality of H functions}
H_{I_f} (d) = H_{I_{f \oplus g}} (d).
\end{equation*}
\end{lemma}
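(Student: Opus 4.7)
The plan is to work at the level of vector-space spans inside $R_d$ and show the stronger statement $(I_g)_d\subseteq (I_f)_d$, from which $(I_{f\oplus g})_d=(I_f)_d+(I_g)_d=(I_f)_d$ and thus the Hilbert function equality follow immediately. The starting point is to rewrite the hypothesis as the polynomial identity
\begin{equation*}
\sum_{i,j}\abs{z_i f_j(z)}^2 \;=\; \sum_{i,k}\abs{z_i g_k(z)}^2 \;+\; \sum_\ell \abs{h_\ell(z)}^2,
\end{equation*}
where both sides are bihomogeneous of bidegree $(d,d)$.

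The technical core is the following standard fact, which I would record as a helper lemma: if $\{p_\alpha\}$ and $\{q_\beta\}$ are homogeneous polynomials of the same degree and $\sum_\alpha \abs{p_\alpha}^2=\sum_\beta \abs{q_\beta}^2$, then $\operatorname{span}_{\C}\{p_\alpha\}=\operatorname{span}_{\C}\{q_\beta\}$ inside $R_d$. The proof is a one-liner using Gram matrices: identifying each polynomial with its coefficient vector in the monomial basis, expanding $\sum \abs{p_\alpha}^2$ in $z^a\bar z^b$ shows that its coefficient matrix equals $AA^*$, where $A$ has the coefficient vectors of the $p_\alpha$ as columns. Similarly the right-hand side is $BB^*$. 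Thus $AA^*=BB^*$, and since $\operatorname{Im}(A)=\operatorname{Im}(AA^*)$ for any matrix, the column spans of $A$ and $B$ agree; translating back, so do the spans of $\{p_\alpha\}$ and $\{q_\beta\}$ in $R_d$.

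Applying the lemma to the displayed identity gives
\begin{equation*}
\operatorname{span}\{z_i f_j\} \;=\; \operatorname{span}\{z_i g_k\}\,+\,\operatorname{span}\{h_\ell\}
\end{equation*}
as subspaces of $R_d$. Because the $f_j$ and $g_k$ are homogeneous of degree $d-1$, the left-hand side is exactly $(I_f)_d$, while the first summand on the right is $(I_g)_d$. Hence $(I_g)_d\subseteq (I_f)_d$, which yields $(I_{f\oplus g})_d=(I_f)_d$, and taking dimensions gives $H_{I_f}(d)=H_{I_{f\oplus g}}(d)$.

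There is no real obstacle beyond being careful about the Gram-matrix/image identity; the content of the lemma is that sum-of-squares equalities for polynomials are precisely equalities of Gram matrices of their coefficient vectors, so they determine spans. Everything else — the inclusion $(I_g)_d\subseteq(I_f)_d$ and the Hilbert function equality — is a formal consequence.
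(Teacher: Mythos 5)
Your proof is correct, and the Gram-matrix argument you outline is the standard way this fact is established in the literature the paper cites. The paper itself does not reproduce a proof of this lemma but instead refers to D'Angelo's original result and to the authors' earlier paper \cite{BG:19}; the argument there proceeds along the same lines as yours, namely rewriting $r\norm{z}^2 = \norm{h}^2$ as $\norm{z\otimes f}^2 = \norm{z\otimes g}^2 + \norm{h}^2$ and then using the fact that equality of squared norms of holomorphic polynomial mappings forces equality of the linear spans of their components.

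Two small points worth making explicit in a written-up version. First, your helper lemma rests on the standard identity $\operatorname{Im}(AA^*) = \operatorname{Im}(A)$; this follows from $\ker(A^*) = \ker(AA^*)$ together with the rank--nullity theorem and the orthogonality relation $\operatorname{Im}(A) = \ker(A^*)^\perp$, and it is worth one line to record since it is the entire engine of the argument. Second, the step where you identify $\operatorname{span}\{z_i f_j : 1\le i\le n,\ 1\le j\le P\}$ with $(I_f)_d$ uses that the $f_j$ all have degree exactly $d-1$, so the degree-$d$ graded piece of $I_f$ is precisely $R_1 \cdot \operatorname{span}\{f_j\}$; this is where the bihomogeneity hypothesis on $r$ is used, and it should be flagged so the reader sees why the hypothesis is not cosmetic. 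With those two remarks added, the write-up is complete and matches the intended argument.
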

The lemma was originally proved by D'Angelo \cite{JPD:05}, though it appeared in a slightly different form. See \cite{BG:19} for a proof of the lemma in this form.

We will make use of Macaulay's estimate on the growth of a homogeneous ideal $I$ in the polynomial ring $R$.  Macaulay's result gives an upper bound for the Hilbert function for $R/I$ and hence a lower bound for the Hilbert function for $I$. Before we can state the estimate, we need some notation.
\begin{definition}
Let $c$ and $\nu$ be positive integers.  The $\nu$-th Macaulay representation of $c$ is the unique way of writing
\begin{equation*}
c= \binom{k_\nu}{\nu} +\binom{k_{\nu-1}} {\nu-1} + \ldots + \binom{k_J} {J}
\end{equation*}
where $k_\nu > k_{\nu-1}> \ldots > k_J \geq J > 0$. We also write
\begin{equation*}
c^{<\nu>} = \binom{k_\nu + 1} {\nu+1} + \binom{k_{\nu-1} + 1}{ \nu } + \ldots + \binom{k_J + 1} {J + 1}.
\end{equation*}
\end{definition}
See \cite{Green:gin} and \cite{GreenRestrictions} for a more extensive discussion of these ideas, including proofs of the uniqueness of the $\nu$-th Macaulay representation of a positive integer and of the elementary properties of the function $c \mapsto c^{<\nu>}$. Moreover, we refer the reader to \cite{BG:19, G1, GB, GL, GLV} for similar approaches to problems in several complex variables using commutative algebra and algebraic geometry.

\begin{theorem}[Macaulay's estimate on the growth of an ideal \cite{Mac1}] \label{thm: Macaulay}  Let $I$ be an ideal in $R$ whose generators are homogeneous polynomials (not necessarily all of the same degree). Then
\begin{equation*}\label{Macaulay's estimate}
H _{R/I}(\ell+1) \leq H_{R/I}(\ell)^{<\ell>}.
\end{equation*}
\end{theorem}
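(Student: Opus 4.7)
The plan is to reduce Macaulay's bound to a direct combinatorial computation on lex-segment ideals. Since the base field is infinite, I would first replace $I$ by its generic initial ideal $\gin(I)$ with respect to the reverse lexicographic term order. Standard Gr\"obner basis theory gives $H_{R/I}=H_{R/\gin(I)}$, so we may assume without loss of generality that $I$ is a Borel-fixed monomial ideal.

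Next I would invoke the theorem that any monomial ideal $I$ can be transformed, one degree at a time, into a lex-segment ideal $L$ with the same Hilbert function, by successively replacing $I_\ell$ with the set of the $\dim I_\ell$ lex-largest monomials of degree $\ell$. The key point is the \emph{lex-compression lemma}: if one lex-compresses $I_\ell$ to form a new ideal $I'$, then $\dim I'_{\ell+1} \leq \dim I_{\ell+1}$, i.e., $H_{R/I'}(\ell+1)\geq H_{R/I}(\ell+1)$. Iterating in all degrees produces the desired $L$, and it suffices to verify the Macaulay bound for $L$.

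For a lex-segment ideal, the inequality becomes an explicit binomial-coefficient calculation. Writing the $\ell$-th Macaulay representation $c=H_{R/L}(\ell)=\binom{k_\ell}{\ell}+\binom{k_{\ell-1}}{\ell-1}+\cdots+\binom{k_J}{J}$, one decomposes the $c$ monomials of degree $\ell$ in $R_\ell\setminus L_\ell$ into successive blocks of sizes $\binom{k_\ell}{\ell},\binom{k_{\ell-1}}{\ell-1},\ldots$ involving nested subsets of the variables. Multiplying these by $z_1,\ldots,z_n$ and counting the distinct products of degree $\ell+1$ not lying in $L$ yields exactly $c^{<\ell>}$, giving the bound.

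The main obstacle is the lex-compression lemma: showing that replacing a degree-$\ell$ slice of a monomial ideal by its lex-segment weakly increases the Hilbert function in degree $\ell+1$ is a genuinely combinatorial statement, typically proved by a sequence of pairwise exchanges between monomials. Once that lemma is established, the rest of the proof is bookkeeping with the Macaulay representation and elementary binomial identities.
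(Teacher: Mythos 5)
The paper does not prove this statement: it is Macaulay's classical theorem, stated and cited from \cite{Mac1} (see also the references to \cite{Green:gin} and \cite{GreenRestrictions} for modern expositions). There is therefore no ``paper's own proof'' to compare against. Your outline is a recognizable and essentially correct sketch of the standard modern argument: pass to a (generic) initial ideal to reduce to monomial ideals, then show that the ``lexification'' $L$ of $I$ --- the graded vector space whose degree-$\ell$ piece consists of the $\dim I_\ell$ lex-largest monomials of degree $\ell$ --- is itself an ideal, and finally compute $\dim\left(R_1 \cdot L_\ell\right)$ explicitly from the $\ell$-th Macaulay representation of $H_{R/I}(\ell)$. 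A few clarifications would tighten the sketch. First, the choice of revlex for the gin is not needed here; any term order gives $H_{R/I}=H_{R/\init I}$, and what you actually use is only that you land on a monomial ideal. Second, the phrase ``transformed one degree at a time into a lex-segment ideal with the same Hilbert function'' is slightly misleading: the cleaner formulation is to define $L$ all at once (lex segment of size $\dim I_\ell$ in each degree $\ell$), observe that $H_{R/L}=H_{R/I}$ by construction, and then reduce the ideal property $R_1 L_\ell \subseteq L_{\ell+1}$ to the size comparison $\dim(R_1 L_\ell)\le \dim(R_1 I_\ell) \le \dim I_{\ell+1}$, since both $R_1 L_\ell$ and $L_{\ell+1}$ are lex segments and containment of lex segments is a size comparison. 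Third, and most importantly, the entire weight of the proof rests on the lex-compression inequality $\dim(R_1 L_\ell)\le\dim(R_1 I_\ell)$; you correctly flag this as the main obstacle but do not prove it, and the exchange argument there (Clements--Lindstr\"om or Kruskal--Katona type) is genuinely the content of Macaulay's theorem. As a sketch acknowledging this gap the proposal is sound; as a complete proof it is not, since the step it defers is the one that does all the work.
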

Elementary linear algebra shows that
\begin{equation}\label{eq:linear alg relation between H functions}
H_{R/I}(\ell)+H_{I} (\ell)={\ell+n-1 \choose \ell}.
\end{equation}

Although the Hilbert function of a graded $R$-module is a useful invariant, the graded Betti numbers provide more detailed information.  We briefly recall the notions of minimal free resolutions and graded Betti numbers. See \cite{BG:19} for a more detailed discussion and additional applications to the study of Hermitian polynomials and Hermitian sums of squares.   A finitely-generated $R$-module $M$ need not be free.  Thus given a set $\{h_j :1 \leq j \leq J\}$ of homogeneous generators for $M$, there may be non-trivial relationships among the generators, i.e., elements $s_j \in R$ for which $\sum  s_jh_j=0$.  The set of all $ s=(s_1,\ldots,s_J)$ with this property forms a module of {\it syzygies} of $M$.

We can gain considerable information about $M$ through a free resolution.
To begin the construction, define a map
$\varphi_0$ from a graded free module $F_0$ to $M$ by sending the
generator $\varepsilon^0_j$ of the $j$-th summand of $F_0$ to the $j$-th homogeneous generator $h_j$ of $M$. Because
we want such a map of graded modules to be degree-preserving, we
regard this generator $\varepsilon^0_j$ as having the same
degree $d_j$ as $h_j$.  We use the notation $R(-d)$ to denote the shift of $R$ by $d$ so that $R(-d)_k=R_{k-d}$. We then write $F_0 = \oplus_j R(-d_j)$. Let $M_1=\ker \varphi_0 $.  $M_1$ is a finitely-generated graded $R$-module and is our first syzygy module.  Choose a finite set of homogeneous generators for $M_1$ and, as above, construct a map
$\varphi_1$ from a graded free module $F_1$ to $F_0$ with image
$M_1$.  Continuing in this manner, we obtain a graded free
resolution of $M=\operatorname{coker} \varphi_1$:
\begin{equation}\label{eq: free resolution}
\ldots \longrightarrow F_i \xrightarrow{\varphi_i} F_{i-1} \longrightarrow \ldots \longrightarrow F_1
\xrightarrow{\varphi_1} F_0.
\end{equation}
This sequence is exact, i.e.,  for each $i$,
$\operatorname{im}\varphi_{i+1} = \ker \varphi_i$.

The Hilbert Syzygy Theorem says that there is a
free resolution of length at most $n$, i.e.,
\begin{equation*}
0\longrightarrow F_k \xrightarrow{\varphi_k} F_{k-1} \longrightarrow \ldots \longrightarrow F_1 \xrightarrow{\varphi_1} F_0
\end{equation*}
with $k \leq n$ (the number of variables in $R$).
A  free resolution of an $R$-module is not
unique.  The {\it minimal free resolution}, however, will be
unique up to isomorphism.  The free resolution \eqref{eq: free
resolution} is minimal if and only if for each $i$,
$\varphi_i$ takes a basis of $F_i$ to a minimal set of generators
for $\varphi_i(F_i)=M_i$. In the minimal free resolution of a finitely-generated graded $R$-module $M$, the number $\beta_{i,j}$ of generators of $F_i$ in degree $j$ is uniquely determined.  These numbers $\beta_{i,j}$ are the {\it graded Betti numbers} of $M$.

We will make use of the following proposition relating Betti numbers and Hilbert functions:
\begin{prop}\label{prop: rel between Betti numbers and Hilbert}
Let $B_j=\sum_{i} (-1)^i \beta_{i,j}$.  Then
\begin{equation*}
H_M(\ell)=\sum_{j=0}^{\ell} B_j \binom{\ell-j+n-1}{n-1}.
\end{equation*}
\end{prop}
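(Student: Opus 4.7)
The plan is to combine two standard facts: that Hilbert functions are additive over short (hence long) exact sequences, and that the Hilbert function of a graded free module is a sum of shifted Hilbert functions of $R$ itself.

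First, I would extend the minimal free resolution given in \eqref{eq: free resolution} by the surjection $\varphi_0 \colon F_0 \to M$. Since $M_1 = \ker \varphi_0 = \operatorname{im} \varphi_1$, and analogously at every subsequent step, the sequence
\begin{equation*}
0 \longrightarrow F_k \xrightarrow{\varphi_k} F_{k-1} \longrightarrow \cdots \longrightarrow F_1 \xrightarrow{\varphi_1} F_0 \longrightarrow M \longrightarrow 0
\end{equation*}
is exact. Restricting to degree $\ell$ yields an exact sequence of finite-dimensional $\mathbb{C}$-vector spaces, and the standard rank-nullity argument (applied successively, or equivalently the Euler characteristic of an exact sequence) gives
\begin{equation*}
H_M(\ell) = \sum_{i \geq 0} (-1)^i H_{F_i}(\ell).
\end{equation*}

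Next I would compute $H_{F_i}(\ell)$ using the decomposition $F_i = \bigoplus_j R(-j)^{\beta_{i,j}}$ that defines the graded Betti numbers. Since $H_{R(-j)}(\ell) = H_R(\ell - j) = \binom{\ell - j + n - 1}{n-1}$ (with the convention that $\binom{a}{b} = 0$ when $a < b$), one has
\begin{equation*}
H_{F_i}(\ell) = \sum_{j} \beta_{i,j} \binom{\ell - j + n - 1}{n-1}.
\end{equation*}

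Substituting this into the alternating sum and interchanging the order of summation (both sums are finite by the Hilbert Syzygy Theorem and by the fact that $\binom{\ell - j + n - 1}{n-1}$ vanishes for $j > \ell$) gives
\begin{equation*}
H_M(\ell) = \sum_{j} \left(\sum_i (-1)^i \beta_{i,j}\right) \binom{\ell - j + n - 1}{n-1} = \sum_{j=0}^{\ell} B_j \binom{\ell - j + n - 1}{n-1},
\end{equation*}
where the truncation at $j = \ell$ uses the vanishing of the binomial factor for $j > \ell$.

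There is no real obstacle here; the entire argument is bookkeeping once one has the additivity of Hilbert functions over exact sequences. The only point requiring minor care is justifying the interchange of the $i$ and $j$ sums, which follows from the finite length of the minimal free resolution and the observation that only finitely many $\beta_{i,j}$ are nonzero for each $i$.
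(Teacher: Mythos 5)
Your argument is correct and complete; it is exactly the standard derivation: augment the minimal free resolution to an exact sequence ending in $M$, take Euler characteristics degreewise, compute $H_{R(-j)}(\ell)=\binom{\ell-j+n-1}{n-1}$, and swap the finite sums. For reference, the paper states this proposition without proof (it is treated as a known fact from commutative algebra), so there is no in-paper argument to compare against; your write-up supplies precisely the proof one would expect a textbook such as Eisenbud or Bruns--Herzog to give.
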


In the sections that follow, we will consider the graded Betti numbers for $I_f$, $I_g$, and $I_{f \oplus g}$ when the components of the mappings are monomials. It will thus be important to be able to find generating sets for the first syzygy module of a monomial ideal.
Let
$\{\, \varepsilon(a): |a| = d-1\,\}$ be the set of generators for
the free module $F_0$ in the minimal free resolution of $I_{f \oplus g}$,  i.e.,
$\varphi_0(\varepsilon(a))=z^a$. Set
$$\sigma(a,b)=\frac{z^b}{\gcd(z^a,z^b)}\varepsilon(a) - \frac{z^a}{\gcd(z^a,z^b)}\varepsilon(b).$$
Then $\{\sigma(a,b)\}$ is a set of generators
for $\ker \varphi_0 = M_1$. Such elements
$\sigma(a,b)$ of $M_1$ are called {\it divided
Koszul relations}.  Of course these relations do not in general
give a {\it minimal} set of generators for $\ker \varphi_0$.

\section{\label{sec:small numbers of negs} The SOS Conjecture for Small Numbers of Negatives}

The idea now is to estimate the rank $\rho$ of $\norm{h}^2$ in terms of the Hilbert functions  of the various ideals. If $I$ is a homogeneous ideal all of whose $k$ generators are of degree $d-1$, Macaulay's estimate and \eqref{eq:linear alg relation between H functions} yield
$$H_{R/I}(d) \leq \left( \binom{d-1+n-1}{d-1} - k\right)^{<d-1>}. $$
If we apply \eqref{eq:linear alg relation between H functions} again, we obtain a lower bound for $H_I(d)$:
\begin{equation}\label{eq:lower bound on H function}
H_I(d) \geq \binom{d+n-eq:1}{d} - \left( \binom{d-1+n-1}{d-1} - k\right)^{<d-1>}.
\end{equation}
When the degree $d-1$ and the number of variables $n$ are fixed, this lower bound depends only on the number of generators $k$ for $I$.  To simplify our notation moving forward, we make a definition.
\begin{definition}
Suppose $I$ is a homogeneous ideal with $k$ generators of degree $d-1$ in the ring $R=\mathbb{C}[z_1,\ldots,z_n]$.  Let $M_{n,d-1} \colon \mathbb{N}_0 \to \mathbb{N}_0$ be defined by
\begin{equation*}
M_{n,d-1}(k) = \binom{d+n-1}{d} - \left( \binom{d-1+n-1}{d-1} - k\right)^{<d-1>}.
\end{equation*}
We call $M_{n,d-1}$ the {\bf Macaulay function}. When no confusion can arise, we suppress the subscripts.
\end{definition}
The next lemma collects a number of useful statements about the Macaulay function. The proof of (a) appears in \cite{GB}.  Proofs of the remaining parts are similar and we omit them.
\begin{lemma}\label{lemma:properties of Macaulay function}
\begin{enumerate}

\item[(a)] If $0 \leq k \leq n$,
\begin{equation*}\label{eq:Macaulay fctn for at most n generators}
M(k) = nk-\frac{k(k-1)}{2}.
\end{equation*}

\item[(b)] If $0 \leq j \leq n$,
\begin{equation*}\label{eq:another way to write Macaulay fctn for a small number of generators}
M(n-j)=\frac{n(n+1)}{2} - \frac{j(j+1)}{2}.
\end{equation*}

\item[(c)] If $j \leq n-1$,
\begin{equation*}
M(n+j) = \frac{n(n+1)}{2} + nj-\frac{j(j+1)}{2} \label{eq:M(n+j) for n>j}
\end{equation*}
\end{enumerate}
\end{lemma}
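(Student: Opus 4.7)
The plan is to compute the $(d-1)$-th Macaulay representation of $c(k) = \binom{n+d-2}{d-1} - k$ in each regime, apply $c \mapsto c^{<d-1>}$ termwise, and then telescope $\binom{n+d-1}{d} - c(k)^{<d-1>}$ via Pascal's identity. Since part (a) is already proved in \cite{GB}, I take it for granted and derive (b) and (c) in the same spirit.

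Part (b) is essentially an algebraic rewrite of (a). For $0 \leq j \leq n-1$ the value $n-j$ lies in $\{1,\ldots,n\}$, so part (a) gives $M(n-j) = n(n-j) - \binom{n-j}{2}$, and expanding and regrouping identifies this with $\frac{n(n+1)}{2} - \frac{j(j+1)}{2}$. The edge case $j = n$ reduces to $M(0) = 0$, which is immediate since the $(d-1)$-Macaulay representation of $\binom{n+d-2}{d-1}$ is a single term, and its image under $c\mapsto c^{<d-1>}$ is $\binom{n+d-1}{d}$.

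For (c) the key step is to identify the $(d-1)$-Macaulay representation of $c(n+j)$ for $1 \leq j \leq n-1$. Starting from the representation used in (a) at $k = n$, namely $\binom{n+d-3}{d-1} + \binom{n+d-4}{d-2} + \cdots + \binom{n+1}{3} + \binom{n}{2}$, I subtract $j$ more and apply the identity $\binom{n}{2} - j = \binom{n-1}{2} + \binom{n-j-1}{1}$ (with the last term vanishing for $j = n-1$) to obtain
\begin{equation*}
c(n+j) = \binom{n+d-3}{d-1} + \binom{n+d-4}{d-2} + \cdots + \binom{n+1}{3} + \binom{n-1}{2} + \binom{n-j-1}{1}.
\end{equation*}
Uniqueness follows from checking that the top indices $n+d-3 > n+d-4 > \cdots > n+1 > n-1 > n-j-1$ strictly decrease in the allowed range of $j$ (and the middle chain is simply empty when $d = 3$).

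Applying $c \mapsto c^{<d-1>}$ termwise (each $\binom{a}{b}$ becomes $\binom{a+1}{b+1}$) and subtracting from $\binom{n+d-1}{d}$, the first $d-2$ Pascal cancellations telescope everything down to $\binom{n+2}{3} - \binom{n}{3} - \binom{n-j}{2}$. Using $\binom{n+2}{3} - \binom{n}{3} = \binom{n+1}{2} + \binom{n}{2} = n^2$ then gives $M(n+j) = n^2 - \binom{n-j}{2}$, which a short algebraic simplification rewrites as $\frac{n(n+1)}{2} + nj - \frac{j(j+1)}{2}$. The main obstacle is producing the Macaulay representation in (c) correctly: the ``gap'' from $\binom{n+1}{3}$ straight down to $\binom{n-1}{2}$ (skipping $\binom{n}{2}$) is the only subtle feature, and one must verify strict-decrease of the top indices throughout $1 \leq j \leq n-1$.
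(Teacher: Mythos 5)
Your proof is correct and follows precisely the computation the paper leaves to the reader: the paper omits the argument for (b) and (c), citing \cite{GB} for (a) and stating that the remaining parts are ``similar,'' and your route — computing the $(d-1)$-th Macaulay representation of $\binom{n+d-2}{d-1}-(n+j)$ via $\binom{n}{2}-j = \binom{n-1}{2}+\binom{n-j-1}{1}$, applying $c\mapsto c^{<d-1>}$ termwise, and telescoping against $\binom{n+d-1}{d}$ with Pascal's identity — is exactly that pattern, with (b) obtained as a direct algebraic consequence of (a) being a harmless shortcut. One cosmetic slip: the telescope consumes $d-3$ Pascal cancellations (for bottom indices $d$ down to $4$), not $d-2$, but this does not affect the final identity $M(n+j)=n^2-\binom{n-j}{2}=\frac{n(n+1)}{2}+nj-\frac{j(j+1)}{2}$.
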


This lemma sheds some light on the form of the inequalities appearing in the Sum of Squares Conjecture.  Using the notation of the Macaulay function, we may restate the conjecture:
\begin{conjecture}[SOS Conjecture, Version 2] \label{thm:sos RESTATED}
Suppose $n \geq 2$ and define $k_0$ as above in \eqref{eq:k0}.
Let $r(z,\bar{z})$ be a real bihomogeneous polynomial and suppose
\begin{equation*}
r(z,\bar{z})\norm{z}^2=\norm{h(z)}^2.
\end{equation*}
Let $\rho$ be the rank of $\norm{h}^2$. Then either
\begin{equation*}
\rho \geq M(k_0+1),
\end{equation*}
 or there exists an integer $k$ with $0\leq k \leq k_0 <n$ such that
\begin{equation*}
M(k) \leq \rho \leq n k.
\end{equation*}
\end{conjecture}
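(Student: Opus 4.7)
The restated conjecture is a rewording of Conjecture \ref{thm:sos} in the language of the Macaulay function, and so establishing it amounts to showing that the two statements are logically equivalent. The plan is to perform a direct term-by-term substitution using the closed form for $M(k)$ provided by Lemma \ref{lemma:properties of Macaulay function}(a), after first checking that the integers involved lie in the small range $\{0,1,\ldots,n\}$ where that closed form is valid.

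The defining condition \eqref{eq:k0} gives $k_0(k_0+1)/2 < n-1$, which forces $k_0 + 1 \leq n$. In particular, every integer $k$ with $0 \leq k \leq k_0+1$ lies in $[0,n]$, so Lemma \ref{lemma:properties of Macaulay function}(a) applies to each of the values $M(k)$ and $M(k_0+1)$ that appear in the statement of Conjecture \ref{thm:sos RESTATED}.

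Applying the identity $M(k) = nk - k(k-1)/2$, I would compute
$$M(k_0+1) = (k_0+1)n - \frac{k_0(k_0+1)}{2},$$
which is precisely the lower bound appearing in the first alternative of Conjecture \ref{thm:sos}. Similarly, for $0 \leq k \leq k_0$, the bound $M(k) \leq \rho \leq nk$ becomes $nk - k(k-1)/2 \leq \rho \leq nk$, recovering the second alternative of the original statement verbatim. Hence the two versions of the conjecture are equivalent.

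There is essentially no obstacle; the argument is a mechanical check once Lemma \ref{lemma:properties of Macaulay function}(a) is in hand, and the only point of attention is the numerical verification that $k_0+1 \leq n$. The substantive value of the restatement is conceptual rather than technical: phrasing the allowed ranks in terms of the Macaulay function emphasizes that the SOS bounds are exactly Macaulay-type estimates for ideals generated by $k$ forms of degree $d-1$, a viewpoint exploited systematically in the sequel via \eqref{eq:lower bound on H function}.
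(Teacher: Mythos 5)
Your proposal is correct and is exactly the (implicit) justification the paper has in mind: the restatement follows by substituting the closed form $M(k)=nk-k(k-1)/2$ from Lemma~\ref{lemma:properties of Macaulay function}(a), valid for $0\leq k\leq n$, and your observation that $k_0(k_0+1)/2<n-1$ forces $k_0+1\leq n$ is the only point that needs checking before the substitution applies. The paper presents Version~2 as a direct restatement after Lemma~\ref{lemma:properties of Macaulay function} without spelling out this mechanical step, so your account matches the intended argument.
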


We first state and prove the result of \cite{GB} for completeness.
\begin{theorem}
The SOS Conjecture holds if $r$ has signature pair $(P,0)$.
\end{theorem}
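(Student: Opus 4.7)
The plan is to translate the rank $\rho$ into a Hilbert function of the ideal $I_f$ and then apply Macaulay's estimate, following the approach outlined in \cite{GB}.

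First, since $N=0$, the holomorphic decomposition gives $r(z,\bar z)=\norm{f(z)}^2$, so
\[
r(z,\bar z)\norm{z}^2 = \sum_{i,j}\abs{z_i f_j(z)}^2.
\]
Two squared norms of holomorphic tuples agree if and only if the tuples span the same subspace of $R_d$, so the components of $h$ span exactly the subspace generated by $\{z_i f_j : 1 \leq i \leq n,\ 1 \leq j \leq P\}$. This subspace is precisely $(I_f)_d$, and hence
\[
\rho = \dim_{\C}(I_f)_d = H_{I_f}(d).
\]
(This is also consistent with Lemma~\ref{lemma: containment of ideals for Pn1}, which is trivial here since $I_g=0$.)

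Next, I would establish the two-sided bound $M(P)\le \rho \le nP$. The upper bound is immediate since the $nP$ polynomials $z_i f_j$ span $(I_f)_d$. The lower bound is exactly the content of Macaulay's estimate applied to the homogeneous ideal $I_f$, which has $P$ generators all of degree $d-1$; combining Theorem~\ref{thm: Macaulay} with \eqref{eq:linear alg relation between H functions} gives $H_{I_f}(d)\ge M_{n,d-1}(P)$ by the very definition of the Macaulay function.

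To finish, I would split on the size of $P$ relative to $k_0$. If $0\le P\le k_0$, then taking $k=P$ in Conjecture~\ref{thm:sos RESTATED} immediately gives $M(k)\le \rho\le nk$. If instead $P\ge k_0+1$, I would pick any $k_0+1$ of the generators $f_{j_1},\dots,f_{j_{k_0+1}}$; these generate a homogeneous ideal $I'\subseteq I_f$ with $k_0+1$ degree-$(d-1)$ generators. Then
\[
\rho = H_{I_f}(d) \ge H_{I'}(d) \ge M(k_0+1),
\]
again by Macaulay, placing $\rho$ in the first alternative of the conjecture. The only place any real care is needed is in verifying that one may assume the chosen $k_0+1$ generators are linearly independent so that the Macaulay bound is not vacuous, but since the original $f_1,\dots,f_P$ are linearly independent by construction of the signature pair, this is automatic. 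There is no serious obstacle here; this is essentially the baseline case of the full conjecture and sets the template for the more delicate $N>0$ analysis later in the paper.
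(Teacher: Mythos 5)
Your proof is correct and follows essentially the same route as the paper: identify $\rho = H_{I_f}(d)$, bound it above by $nP$ via the spanning set $\{z_i f_j\}$, bound it below by $M(P)$ via Macaulay's estimate, and then split on whether $P \leq k_0$ or $P \geq k_0+1$. The only cosmetic difference is in the last case: the paper invokes monotonicity of the Macaulay function $M$ to pass from $\rho \geq M(P)$ to $\rho \geq M(k_0+1)$, whereas you apply Macaulay to a sub-ideal generated by $k_0+1$ of the $f_j$ and use $I' \subseteq I_f$; both are valid and equally short.
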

\begin{proof}
In this case, $\rho = H_{I_f}(d)$.  We have the trivial upper bound
\begin{equation*}
H_{I_f}(d) \leq nP
\end{equation*}
because $(I_f)_d$ is spanned by $\{z_j f_k:1\leq j \leq n, 1\leq k \leq P\}$.

We use Macaulay's estimate to obtain the lower bound
$$H_{I_f}(d) \geq M(P). $$
Thus
$$M(P) \leq \rho \leq nP. $$
We conclude that, if $P \geq k_0+1$, $\rho \geq M(k_0 +1)$, whereas if $0 \leq P \leq k_0$, $\rho$ is in the interval of allowable ranks defined by
$$M(P) \leq \rho \leq nP. $$
\end{proof}

\begin{remark}
The ``gaps" in possible rank for $\norm{h}^2$ seem only to occur in the positive-definite case; in all of the subsequent results in which we prove that the SOS Conjecture holds for certain $r$ of signature pair $(P,N)$ for some $N\geq 1$, we do so by proving that the rank of $r(z,\bar{z}) \norm{z}^2$ exceeds $M(k_0+1)$.  Ebenfelt \cite{E:17} conjectures that this pattern is true in general, and we concur.
\end{remark}

Our next goal is to prove a number of results that show that the SOS Conjecture holds when $r$ has diagonal coefficient matrix with signature pair $(P,N)$ for $N \geq 1$, but small.
For all of the results in the remainder of this section, we assume $r(z,\bar{z})$ is a real bihomogeneous polynomial of bi-degree $(d-1,d-1)$ such that $r(z,\bar{z})\norm{z}^2=\norm{h(z)}^2$, and we always use $\rho$ to denote the rank of $\norm{h}^2$.  A number of these results hold under more general hypotheses and will be discussed in this greater generality in a forthcoming paper.

We need the following easy lemma.
\begin{lemma}[Halfpap and Lebl \cite{HL:sig}]\label{lemma:P at least n}
If $r$ has diagonal coefficient matrix with signature pair $(P,N)$ for $N \geq 1$, then $P\geq n$.
\end{lemma}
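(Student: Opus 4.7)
The plan is to translate the diagonality of $r$ and the identity $r\norm{z}^2=\norm{h}^2$ into an explicit non-negativity condition on the coefficients of $r\norm{z}^2$, and then to count how many positive coefficients of $r$ are forced by a single negative one. Write $r(z,\bar z)=\sum_{|a|=d-1} c_a\, z^a\bar z^a$ with $c_a\in\R$; by diagonality, $P=\#\{a:c_a>0\}$ and $N=\#\{a:c_a<0\}$. Expanding $\norm{z}^2=\sum_i z_i\bar z_i$ and collecting terms yields
\begin{equation*}
r(z,\bar z)\norm{z}^2=\sum_{|b|=d}\gamma_b\,z^b\bar z^b,\qquad \gamma_b:=\sum_{i:\,b_i\ge 1}c_{b-e_i},
\end{equation*}
so $r\norm{z}^2$ again has diagonal coefficient matrix. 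On the other hand, the coefficient matrix of the squared norm $\norm{h}^2$ is positive semidefinite, and since the monomials $z^b\bar z^{b'}$ are linearly independent the two coefficient matrices must coincide; in particular every diagonal entry satisfies $\gamma_b\ge 0$.

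Fix $b_0$ with $c_{b_0}<0$, which exists because $N\ge 1$. Applying the bound $\gamma_{d\,e_{i_0}}\ge 0$ first rules out $b_0=(d-1)e_{i_0}$ for any single $i_0$, so $|\operatorname{supp}(b_0)|\ge 2$. For each $i\in\{1,\dots,n\}$, the inequality $\gamma_{b_0+e_i}\ge 0$ contains the negative summand $c_{b_0}$, so the remaining contributions---of the form $c_{b_0+e_i-e_j}$ for $j\in\operatorname{supp}(b_0)\setminus\{i\}$---must sum to a strictly positive number. Therefore there exists some $j(i)\in\operatorname{supp}(b_0)\setminus\{i\}$ for which $a^{(i)}:=b_0+e_i-e_{j(i)}$ satisfies $c_{a^{(i)}}>0$.

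To finish, one shows that $a^{(1)},\dots,a^{(n)}$ are pairwise distinct positive multi-indices, which gives $P\ge n$. An equality $a^{(i)}=a^{(i')}$ with $i\ne i'$ rearranges to $e_i+e_{j(i')}=e_{i'}+e_{j(i)}$ as formal sums of standard basis vectors; comparing the coefficient at position $i$ on each side, and using $j(i)\ne i$ together with $i'\ne i$, produces an immediate contradiction. The only step requiring real care is the construction of $j(i)$, with its small case split on whether $i\in\operatorname{supp}(b_0)$; once the support bound $|\operatorname{supp}(b_0)|\ge 2$ is in hand, the remainder is pigeonhole applied to the inequalities $\gamma_b\ge 0$, and I do not expect any deeper obstacle.
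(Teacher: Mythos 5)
The paper does not supply its own proof of this lemma; it simply cites Halfpap--Lebl, so there is no in-paper argument to compare against. On its own merits your proof is correct and self-contained. The diagonality of $r$ makes the coefficient matrix of $r\norm{z}^2$ diagonal as well, so the squared-norm hypothesis is exactly equivalent to $\gamma_b\ge 0$ for all $\abs{b}=d$; your reduction to that condition is legitimate. The step $\gamma_{de_{i_0}}=c_{(d-1)e_{i_0}}$ correctly forces $\abs{\operatorname{supp}(b_0)}\ge 2$, so each set $\operatorname{supp}(b_0)\setminus\{i\}$ is nonempty and the pigeonhole producing $j(i)$ with $c_{a^{(i)}}>0$ goes through; and the check that $a^{(i)}\ne a^{(i')}$ for $i\ne i'$ (compare the $i$-th coordinate of $e_i+e_{j(i')}=e_{i'}+e_{j(i)}$, using $j(i)\ne i$ and $i'\ne i$) is airtight. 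One small point worth making explicit: $a^{(i)}=b_0+e_i-e_{j(i)}$ is a genuine multi-index of length $d-1$ because $j(i)\in\operatorname{supp}(b_0)$ and $j(i)\ne i$, so no coordinate goes negative. Your argument is local---it extracts $n$ distinct positive coefficients ``adjacent'' to a single negative one---which is a clean, elementary route to the bound.
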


\begin{prop}\label{prop:N=1}
Suppose $r$ has diagonal coefficient matrix with signature pair $(P,1)$.  Then $\rho \geq M(k_0+1)$and hence the SOS Conjecture holds in this case.
\end{prop}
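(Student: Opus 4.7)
The plan is to combine Lemma~\ref{lemma: containment of ideals for Pn1}, Macaulay's growth estimate, and a direct monomial count. Since $r$ has diagonal coefficient matrix with signature pair $(P,1)$, its holomorphic decomposition takes the monomial form $f_j = z^{\alpha_j}$ and $g = z^{\beta}$ for $P+1$ distinct multi-indices of length $d-1$. Consequently $r(z,\bar z)\norm{z}^2$ again has diagonal coefficient matrix, with the (necessarily nonnegative) entry at $z^\gamma$, $|\gamma|=d$, given by
\[
c(\gamma) \;=\; \#\{(k,l) : \alpha_k + e_l = \gamma\} \;-\; \#\{l : \beta + e_l = \gamma\}.
\]
Therefore $\rho$ is precisely the number of $\gamma$ with $c(\gamma) > 0$, and the task becomes a combinatorial lower-bound problem on degree-$d$ monomials.

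First, I would invoke Lemma~\ref{lemma: containment of ideals for Pn1} to obtain $(I_g)_d \subseteq (I_f)_d$ and $H_{I_f}(d) = H_{I_{f \oplus g}}(d)$. If $\gamma \in (I_f)_d$ satisfies $c(\gamma)=0$, then $\gamma$ must lie in $(I_g)_d$, since otherwise the second count vanishes while the first is at least one. Because $(I_g)_d$ is spanned by the $n$ distinct monomials $\{\beta + e_l : 1 \leq l \leq n\}$, at most $n$ monomials of $(I_f)_d$ can have $c(\gamma)=0$, giving the key inequality
\[
\rho \;\geq\; H_{I_f}(d) - n \;=\; H_{I_{f \oplus g}}(d) - n.
\]
Macaulay's estimate (Theorem~\ref{thm: Macaulay}) applied to $I_{f \oplus g}$, whose $P+1$ linearly independent generators all have degree $d-1$, then yields $H_{I_{f \oplus g}}(d) \geq M(P+1)$.

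The numerical payoff comes from Lemma~\ref{lemma:P at least n}, which supplies $P \geq n$ so that monotonicity of $M$ gives $M(P+1) \geq M(n+1)$. Lemma~\ref{lemma:properties of Macaulay function}(c) with $j=1$ yields
\[
M(n+1) - n \;=\; \tfrac{n(n+1)}{2} - 1 \;=\; M(n) - 1,
\]
and the definition of $k_0$ forces $k_0 \leq n-2$ (otherwise $(n-1)n/2 < n-1$ would fail for $n\geq 2$), so $k_0+1 \leq n-1$. Since $M(n) - M(n-1) = 1$ and $M$ is increasing on $[0,n]$, one concludes $M(k_0+1) \leq M(n)-1$, which chains with the preceding inequalities to give $\rho \geq M(k_0+1)$. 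The main subtlety is the monomial count: the slack $H_{I_f}(d) - n$ depends crucially on $(I_g)_d$ containing only $n$ monomials, a feature special to $N=1$, and is exactly absorbed by the Macaulay boost from the extra generator $g$; this delicate balance would require a much more refined analysis for $N \geq 2$.
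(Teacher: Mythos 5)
Your argument is correct and follows essentially the same route as the paper: bound $\rho$ from below by $H_{I_{f\oplus g}}(d) - H_{I_g}(d)$, apply Macaulay's estimate to get $H_{I_{f\oplus g}}(d)\geq M(P+1)$, use $H_{I_g}(d)\leq n$, invoke Lemma~\ref{lemma:P at least n} for $P\geq n$, and finish with Lemma~\ref{lemma:properties of Macaulay function}(c) and $k_0+1\leq n-1$. The only cosmetic difference is that you derive the first inequality via the detour $H_{I_f}(d)=H_{I_{f\oplus g}}(d)$ from Lemma~\ref{lemma: containment of ideals for Pn1} and a direct monomial count (note that your $c(\gamma)$ should really be the $|C_a|^2$-weighted sum rather than a raw count, though this does not affect the sign argument), whereas the paper simply states $\rho\geq H_{I_{f\oplus g}}(d)-H_{I_g}(d)$ outright.
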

\begin{proof}
We have
$$\rho \geq H_{I_{f \oplus g}(d)}-H_{I_g}(d) \geq M(P+1) - n. $$
By Lemma \ref{lemma:P at least n},  $P\geq n$.  Thus by part (c) of Lemma \ref{lemma:properties of Macaulay function},
$$M(P+1) \geq M(n+1)=\frac{n(n+1)}{2}+n-1.$$
Because  $k_0 + 1 < n$ and $M$ is a strictly increasing function,
\begin{eqnarray*}
\rho &\geq& M(n+1)-n\\
&=& \frac{n(n+1)}{2} - 1\\
&=&M(n) - 1\\
&\geq&M(k_0 +1).
\end{eqnarray*}
\end{proof}

For $n=3$, the type of argument used in the proof of  Proposition \ref{prop:N=1} does not immediately generalize to show  that the SOS Conjecture holds for $r$ of signature pair $(P,2)$.  Indeed, if we use $nN=6$ as the upper bound for $H_{I_g}(d)$ and $M(P+2)$ as the lower bound for $H_{I_{f \oplus g}}(d)$, we obtain the estimate
$$\rho \geq M(P+2)-6 \geq M(5)-6=9-6=3. $$
This inequality is true, but the lower bound is less than the hoped-for lower bound of $M(k_0+1)=M(2)=5$.  A more subtle argument is required, and hence we look to the Betti numbers.

\begin{prop}\label{prop:N=2 and b=0}
Suppose $r(z,\bar{z})$ has diagonal coefficient matrix with signature pair $(P,2)$.  Suppose $\beta_{1,d}=0$.  Then $P\geq 5$ and $\rho\geq 6$. Thus the SOS Conjecture holds in this case.
\end{prop}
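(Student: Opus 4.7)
The plan is to read $\beta_{1,d}$ as $\beta_{1,d}(I_g)$ — the only interpretation that is not self-contradictory, since $\beta_{1,d}(I_{f\oplus g})=0$ combined with Lemma \ref{lemma: containment of ideals for Pn1} would yield $n(P+2)=H_{I_{f\oplus g}}(d)=H_{I_f}(d)\leq nP$, which is absurd. With $I_g$ generated by the two monomials $g_1,g_2$, the only potential linear syzygy is the divided Koszul relation, which lies in degree $d$ precisely when $\gcd(g_1,g_2)$ has degree $d-2$. Hence $\beta_{1,d}(I_g)=0$ is equivalent to saying $g_1,g_2$ are non-adjacent on the triangular lattice of degree-$(d-1)$ multi-indices, so the six monomials $z_j g_\ell$ are pairwise distinct and $H_{I_g}(d)=2n=6$. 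Lemma \ref{lemma: containment of ideals for Pn1} gives $(I_g)_d \subseteq (I_f)_d$; since all generators are monomials, each $z_j g_\ell$ must equal some $z_i f_k$. As $f$'s and $g$'s are distinct monomials, $i\neq j$, forcing $z_i\mid g_\ell$ and hence $|\mathrm{supp}(g_\ell)|\geq 2$. Moreover, each realizing $f_k$ sits at a lattice neighbor $g_\ell + e_j - e_i$ of $g_\ell$.

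To establish $P\geq 5$, I would set up six coverage constraints, one per pair $(\ell,j)\in\{1,2\}\times\{1,2,3\}$, each requiring some $f_k$ equal to $g_\ell+e_j-e_i$ for an admissible $i$. Because distinct directions $j$ shift a fixed $g_\ell$ to distinct exponent vectors, a single $f_k$ satisfies at most one constraint per $g_\ell$, and therefore at most two constraints overall. The crux is to show that the bipartite "double-cover" graph, whose edges connect $g_1$'s three direction-constraints to $g_2$'s three direction-constraints via $f_k$'s realizing both, has maximum matching at most $1$; then $P\geq 6-1=5$. Since $g_1,g_2$ are non-adjacent, the displacement $g_1-g_2$ has $\ell^1$-norm $\geq 4$. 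Up to coordinate permutation, the distance-$2$ cases are $2e_a-2e_b$, $2e_a-e_b-e_c$, and $e_a+e_b-2e_c$; a direct exponent-matching calculation produces $1$, $2$, and $2$ double-cover edges respectively, but in each case all edges share a common endpoint on one side of the bipartition, so the matching number is $1$. For distance $\geq 3$, the identity $(e_j+e_{i'})-(e_i+e_{j'})=g_2-g_1$ is unsolvable (the left side has all entries in $\{-2,-1,0,1,2\}$, whereas the right side must have a coordinate of absolute value $\geq 3$), so no double-cover exists.

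Having established $P\geq 5$, Macaulay's estimate gives $H_{I_{f\oplus g}}(d)\geq M_{3,d-1}(P+2)\geq M_{3,d-1}(7)$, and a short computation of $\nu$-th Macaulay representations shows $M_{3,d-1}(7)=12$ for every $d\geq 4$; the case $d\leq 3$ is vacuous since $\binom{d+1}{2}<7$. Combining with $H_{I_g}(d)=6$ yields
\begin{equation*}
\rho \;\geq\; H_{I_{f\oplus g}}(d)-H_{I_g}(d) \;\geq\; 12-6 \;=\; 6,
\end{equation*}
which exceeds $M(k_0+1)=M(2)=5$ and thus verifies the SOS Conjecture in this case. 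The main obstacle will be Step 2: the bipartite-matching bound requires the short but slightly delicate enumeration of the distance-$2$ displacements and checking that every double-cover funnels through a common direction on one side. The $P\geq 5$ bound is sharp (the displacements $2e_a-2e_b$ are the most permissive), and everything else is routine given the Betti/Macaulay framework established in Section \ref{sec:alg}.
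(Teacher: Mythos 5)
Your proof is correct and follows essentially the same strategy as the paper's: read $\beta_{1,d}=0$ as $\deg\gcd(g_1,g_2)\leq d-3$ so that $H_{I_g}(d)=6$, establish $P\geq 5$ by counting how many of the six required monomials $z_jg_\ell$ can be reached from a single $f_k$, and conclude via Macaulay's estimate $\rho\geq M(P+2)-6\geq M(7)-6=12-6=6$. Your bipartite double-cover graph and the classification of $g_1-g_2$ by $\ell^1$-norm is a cleaner reorganization of the paper's argument---the paper's split between syzygy degree exactly $d+1$ versus $\geq d+2$ is exactly your $\ell^1$-distance $4$ versus $\geq 6$ dichotomy, its two subcases ($k=m'$, $k\neq m'$) are your displacements $2e_a-2e_b$ and $\pm(2e_a-e_b-e_c)$, and its observation that the candidate $A$'s together reach at most three of the six monomials is precisely your ``all double-cover edges share a common endpoint, so the matching number is $1$''---but the underlying combinatorics are the same. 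The matching bound $P\geq 6-m$ and the $\{-2,\dots,2\}$-versus-$\geq 3$ coordinate argument for distance $\geq 6$ are both sound, and the constancy of $M_{3,d-1}(7)=12$ for $d\geq 4$ that you invoke is an easy check that the paper uses implicitly.
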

\begin{proof}
Suppose $z^{B^1}$ and $z^{B^2}$ are the two monomial components of the mapping $g$.
We must consider two cases depending on the degree of the syzygy $\sigma(B^1, B^2)$ between them.

The first case is easy.  If the degree of $\sigma(B^1, B^2)$ is at least $d+2$, we claim that we must have at least $6$ distinct generators for $I_f$.  Indeed, because $(I_g)_d \subseteq (I_f)_d$, the latter must contain all the monomials
$$z^{B^j+e_k}, \quad1\leq j \leq 2, \quad 1\leq k \leq 3.$$
Thus each of these monomials must be of the form $z^{A^l+e_m}$ for some monomial $z^{A^l}$ in the generating set for $I_f$.  We claim there must be at least $6$ distinct $A^l$.

Fix $j$.  Suppose there were a single $A^l$ such that
\begin{eqnarray*}
B^j+e_k&=&A^l+e_m\\
B^j+e_{k'}&=&A^l+e_{m'}.
\end{eqnarray*}
Then $e_m-e_k=e_{m'}-e_{k'}$.  If $m=k$ and $m'=k'$, we would have $B^j=A^l$, which is impossible.  Thus $m=m'$ and $k=k'$.  Thus the three distinct monomials $z^{B^j+e_k}$ must also have the form $z^{A^l+e_m}$ for {\it three distinct monomials}  $z^{A^l}$ in $I_f$.  In order for this argument to show that there are $6$ distinct $A^l$, we must show that it is impossible to have  a single $A^l$ with
\begin{equation*}
B^1+e_k = A^l+e_m \quad \text{and} \quad B^2+e_{k'}=A^l+e_{m'}.
\end{equation*}
If this were so, we would have
$$z^{B^1+e_k+e_{m'}} = z^{B^2+e_{k'}+e_m}, $$
i.e., $z_k z_{m'} z^{B^1} = z_{k'} z_m z^{B^2}$,
which implies a syzygy between the generators of $I_g$ of degree at most $d+1$.  This contradiction establishes that in this case $I_f$ has at least $6$ generators.

For the next case, we suppose that $\sigma(B^1,B^2)$ has degree $d+1$. Now it is possible for generators $z^A$ of $I_f$ to generate two distinct monomials in $(I_g)_d$.  As we saw above, such an $A$ satisfies
\begin{equation*}\label{eq:relations between As and Bs}
B^1+e_k = A+e_m \quad \text{and} \quad B^2+e_{k'}=A+e_{m'}.
\end{equation*}
Then  the syzygy between $z^{B^1}$ and $z^{B^2}$ is
\begin{equation*}\label{eq:syzygy between the two negs}
\sigma(B^1,B^2)=z_k z_{m'}\varepsilon(B^1)-z_{k'}z_m\varepsilon(B^2).
\end{equation*}
Note that the sets $\{k,m'\}$ and $\{k',m\}$ must be disjoint.
Now, \eqref{eq:relations between As and Bs} implies that there exist multi-indices $C^1$ and $C^2$ of length $d-2$ such that
\begin{gather}
B^1=C^1+e_m  \quad \text{and} \quad A=C^1+e_k\\
B^2=C^2+e_{m'}  \quad \text{and} \quad A=C^2+e_{k'}.
\end{gather}
Now, because
$$A=C^1+e_k=C^2+e_{k'}, $$
there exists a multi-index $D$ of length $d-3$ such that
\begin{equation*}
C^1=D+e_{k'} \quad \text{and} \quad C^2=D+e_k.
\end{equation*}
We conclude that
\begin{eqnarray}
B^1&=& D + e_{k'}+e_m\nonumber\\
A&=& D + e_k +e_{k'} \label{eq:B1,A,B2}\\
B^2 &=& D+e_k+e_{m'}\nonumber.
\end{eqnarray}

We now understand how $A$, $B^1$ and $B^2$ must be related if $z^A$ generates two different elements of $(I_g)_d$.  We need next to know how many such multi-indices $A$ there are, and how many total elements of $(I_g)_d$ they generate.  We consider cases depending on how the basis vectors $e_k,e_{k'},e_m,e_{m'}$ can be chosen.

In equations \eqref{eq:B1,A,B2}, the sets $\{k',m\}$ and $\{k,m'\}$ must be disjoint, though it is possible that $k'=m$ or $k=m'$.  Now, $k,k',m,m'$ must come from the set $\{1,2,3\}$.  Thus it is in fact necessary that either $k'=m$ or $k=m'$.  We assume without loss of generality that $k'=m$, and furthermore that this index is 1.  Then
$B^1=D+2e_1$. We now have two subcases: $k=m'$ or $k \neq m'$.

\noindent{\it Subcase 1: $k=m'$.} Assume without loss of generality that this index is 2.  Then
$$B^2=D+2e_2 $$
and there is only one possible $A$:
$$A=D+e_1+e_2. $$
Because this is the only $A$ that can give rise to two elements of $(I_g)_d$ and because $(I_g)_d$ contains $6$ distinct monomials, we must have $P \geq 5$ in this subcase.

\noindent{\it Subcase 2: $k \neq m'$.} Thus one of these indices is equal to 2 and the other is equal to 3.
Then
$$B^2=D+e_2+e_3 $$
and there are two possibilities for $A$:
$$A^1=D+e_1+e_2 \quad \text{or} \quad A^2 = D+e_1+e_3. $$
Now, because
$$B^1+e_2=A^1+e_1, \quad B^1+e_3=A^2+e_1, \quad B^2+e_1=A^1+e_3, \quad B^2+e_1=A^2+e_2, $$
at most three elements of $(I_g)_d$ can be generated by $z^{A^1}$ and $z^{A^2}$.  If $I_f$ contains only one of the $z^{A^j}$, we still need another $4$ generators for $I_f$, for a total of at least $5$.  If $I_f$ contains both $z^{A^1}$ and $z^{A^2}$, we still need another $3$, for a total of (again) at least $5$ generators for $I_f$. We have proved the first part of the proposition.

For the second part, note that
\begin{eqnarray*}
\rho&\geq&H_{I_{f\oplus g}}(d) - H_{I_g}(d)\\
&\geq&M(P+2)-6\\
&\geq&M(7)-6=12-6=6.
\end{eqnarray*}
\end{proof}


The next proposition still considers $r$ of signature pair $(P,2)$, but now addresses the case in which $\sigma(B^1,B^2)$ only has degree $d$.  In this case, we get a smaller lower bound on $P$, but $H_{I_g}(d)$ is also smaller.
\begin{prop}\label{prop:P0 for N=2 and b=1.}
Suppose $r(z,\bar{z})$ has diagonal coefficient matrix with signature pair $(P,2)$.  Suppose $\beta_{1,d}=1$. Then $P \geq 4$ and $\rho \geq 5$.  Thus the SOS Conjecture holds in this case.
\end{prop}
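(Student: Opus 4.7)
My plan parallels Proposition~\ref{prop:N=2 and b=0}, but adapted to the case where the syzygy $\sigma(B^1,B^2)$ has the minimum possible degree. The hypothesis $\beta_{1,d}=1$ forces $z^{B^1}$ and $z^{B^2}$ to differ by a single variable, so after relabeling I may write $B^1 = D + e_1$ and $B^2 = D + e_2$ for some multi-index $D$ of length $d-2$. The six products $z_i z^{B^j}$ then collapse to the five distinct monomials
$$
z_1^2 z^D,\ z_1 z_2 z^D,\ z_1 z_3 z^D,\ z_2^2 z^D,\ z_2 z_3 z^D,
$$
and hence $H_{I_g}(d)=5$.

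To show $P\geq 4$, I will examine, for each of the $\binom{5}{2}=10$ pairs $\{X,Y\}$ of these monomials, whether a single generator $z^A$ of $I_f$ (necessarily with $A\neq B^1,B^2$) can satisfy $X = z_m z^A$ and $Y = z_{m'} z^A$ for some $m,m'\in\{1,2,3\}$. Any such $A$ produces the divisibility relation $X z_{m'} = Y z_m$, which reduces to a trivial check on exponent multi-sets. Running through the ten pairs, the equation either has no solution (as for $\{z_1^2 z^D, z_2^2 z^D\}$ or $\{z_1 z_3 z^D, z_2^2 z^D\}$) or forces $A = B^1$ or $A = B^2$, with the sole exception of the pair $\{z_1 z_3 z^D,\ z_2 z_3 z^D\}$, which admits the unique solution $A = D + e_3$. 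Since a generator covering three of the five monomials would cover every pair among them, no generator covers three; hence at most two of the five monomials can share a covering generator, and the remaining three require distinct generators, giving $P\geq 4$.

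For the rank bound, Lemma~\ref{lemma: containment of ideals for Pn1} together with Macaulay's estimate applied to $I_{f\oplus g}$ and the bound $H_{I_g}(d)=5$ yields
$$
\rho \geq H_{I_{f\oplus g}}(d) - H_{I_g}(d) \geq M(P+2) - 5 \geq M(6) - 5.
$$
A direct Macaulay-representation computation gives $M_{3,d-1}(6)=10$ for every $d\geq 3$ (the $d=2$ case being vacuous, since Lemma~\ref{lemma:P at least n} would force $P\geq 3$, exceeding the available supply of three linearly independent degree-$1$ monomials once the two $g_j$ are removed), so $\rho\geq 5 = M(k_0+1)$, which is the SOS bound when $n=3$. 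The main obstacle is the ten-pair enumeration in the second paragraph; the subtle point is verifying that every pair other than $\{z_1 z_3 z^D,\ z_2 z_3 z^D\}$ either admits no valid $(m,m')$ or yields $A\in\{B^1,B^2\}$, which is excluded by the linear independence of the generators in the holomorphic decomposition of $r$.
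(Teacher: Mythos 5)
Your proof is correct and follows essentially the same approach as the paper: both deduce that $\beta_{1,d}=1$ forces $B^1=D+e_1$, $B^2=D+e_2$, show that exactly one non-trivial generator $A=D+e_3$ can cover two of the five monomials in $(I_g)_d$ (the paper via the $k=k'$ vs.\ $m=m'$ dichotomy, you via direct enumeration of the $\binom{5}{2}$ pairs), conclude $P\geq 4$, and then apply Macaulay's estimate to get $\rho \geq M(6)-5=5$. Your explicit verification that $M_{3,d-1}(6)=10$ for all relevant $d$ and the remark that $d=2$ is vacuous by Lemma~\ref{lemma:P at least n} are nice additions but do not change the substance of the argument.
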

\begin{proof}
Because the syzygy between $z^{B^1}$ and $z^{B^2}$ is of degree less than $d+2$, it is possible for a single generator $z^A$ of $I_f$ to give rise to more than one monomial in $(I_g)_d$. For such an $A$, the same argument from the proof of Proposition \ref{prop:N=2 and b=0} still allows us to express $B^1$, $A$, and $B^2$ as in equations \eqref{eq:B1,A,B2}
for some multi-index $D$ of length $d-3$.  The syzygy between $z^{B^1}$ and $z^{B^2}$ can still be expressed by the relationship
$$B^1+e_k+e_{m'} = B^2+e_{k'}+e_m. $$
In this case, however, the syzygy is only of degree $d$. Thus $\{k,m'\}$ and $\{k',m\}$ have precisely one element in common.  We can not have $k=m$, for this would imply $B^1=A$, which is false.  Similarly we can not have $k'=m'$.  We have two remaining cases to analyze.

\noindent{\it Case 1: $k=k'$.} In this case, if we write $C=D+e_k=D+e_{k'}$, we have
\begin{eqnarray*}
B^1&=& C+e_m\\
A&=& C + e_k \\
B^2&=& C+e_{m'}.
\end{eqnarray*}
We are thinking of $m$ and $m'$ as fixed.  There is then only one choice for $A$:
$$A= C+e_k, \; k\neq m,m'.$$
Then
$$B^1 + e_k=A + e_m\quad \text{and} \quad
B^2 + e_k= A+e_{m'}.$$
Thus  the monomial $z^A$ gives rise to $2$ distinct elements of $(I_g)_d$.

\noindent{\it Case 2: $m=m'$.}
Now we just have
$$B^1+e_k=A+e_m=B^2+e_{k'},$$
where $k$ and $k'$ are fixed (determined by the syzygy between $z^{B^1}$ and $z^{B^2}$).  Only one multi-index $A=B^1+e_k-e_m$ satisfies this equation, and it only gives rise to the two (equal) monomials $z^{B^1+e_k}$ and $z^{B^2+e_{k'}}$.  Thus although $z^A$ is needed in $I_f$, it only gives rise to one element of $(I_g)_d$.

Now, because $\beta_{1,d}=1$, $(I_g)_d$ has $5$ distinct monomials. We can use one monomial of the type described in Case 1 to reach two of them.  The remaining three each require one additional monomial in $I_f$, for a total of at least $4$ generators for $I_f$.

Because $P\geq 4$,
\begin{eqnarray*}
\rho&\geq&M(P+2)-5\\
&\geq&M(6)-5=10-5=5.
\end{eqnarray*}
\end{proof}

\section{\label{sec:related ques} A Related Rank Problem}

As we discussed in the introduction, the Sum of Squares Conjecture deals with one possible question we could ask about a bihomogeneous polynomial, namely, What ranks are possible for  $r(z,\bar{z}) \norm{z}^2$ if it is known to be a squared norm? Another question seeks to describe the way the rank of $r(z,\bar{z}) \norm{z}^2$ depends on its bi-degree.

Although one could attempt to answer this question for arbitrary bihomogeneous polynomials, it is a hard question. The question has only been answered completely in the case in which the coefficient matrix of $r$ is diagonal.  It has been answered by Lebl and Peters as part of their consideration of the problem of obtaining sharp degree bounds for proper monomial mappings between balls in complex Euclidean spaces. We will discuss this connection at the end of the paper.  In this section, we discuss their work and give a new proof of one of their theorems using the tools of commutative algebra discussed in Section \ref{sec:alg}.

When $r$ has a diagonal coefficient matrix, the components of the mappings $f$ and $g$ in the holomorphic decomposition of $r$ are monomials.   Thus
\begin{eqnarray*}
r(z,\bar{z})&=&\norm{f(z)}^2 - \norm{g(z)}^2\\
&=&\sum_a |C_a|^2 |z^a|^2-\sum_b |C_b|^2 |z^b|^2,
\end{eqnarray*}
where the first sum is taken over a set of $P$ multi-indices of length $d-1$ and the second sum is taken over a set of $N$ multi-indices of length $d-1$.
If we replace $|z_j|^2$ with $x_j$ and $|C_a|^2$ with $c_a$, we obtain a homogeneous polynomial
$$q(x)=\sum_a c_a x^a, $$
where the sum is taken over all multi-indices of length $d-1$, and precisely $P$ of the $c_a$ are positive and $N$ are negative. With the same change of variables, $\norm{z}^2$ gives the polynomial $s=\sum x_j$, and
the rank of $r(z,\bar{z}) \norm{z}^2$ is the number of distinct monomials appearing in $sq$ with non-zero coefficient.  We again denote this quantity by either $\rho$ or $\rho(sq)$. For the rank problem considered in this section, {\it we do not assume that $r(z,\bar{z})\norm{z}^2$ is a squared norm,} and thus $sq$ may have both positive and negative coefficients.

Previous work on this rank question used a graph-theoretic tool called the Newton diagram of $q$. Although our goal is to replace the Newton diagram of $q$ with
numerical invariants of certain ideals generated by the monomials appearing in
$q$, we briefly discuss the Newton diagram so that we can
describe the relationship between our work and that of D'Angelo,
Kos, and Riehl in \cite{DKR:03} and Lebl and Peters in \cites{LP:11,LP:12}. Let $q(x)=\sum_a c_a x^a$ be a homogeneous polynomial.  Define a graph $\Gamma(q)$ as follows: For the vertex set,  take one vertex for each multi-index $a$ of length $d-1$ for which $c_a$ appears in $q$ with non-zero coefficient. Introduce an edge between the vertices associated with $a$ and $b$ if there exists $j,k \in \{1,\ldots,n\}$ such that $x_j x^a = x_k x^b$.

Lebl and Peters introduce two additional concepts that play an
essential role in their theorem.
\begin{definition}
The Newton diagram of $q$ is {\it connected} if $\Gamma(q)$ is a
connected graph.
\end{definition}
\begin{definition}
$p$ has {\it $\pi$-degree} $\pi(p)$ if $\pi(p)$ is the smallest
integer such that $p(x)=x^{\alpha}\psi(x)$ with $\deg \psi =\pi(p)$.
\end{definition}

Lebl and Peters obtain the following.
\begin{theorem}[Lebl and Peters, \cite{LP:11}] \label{thm:LP for 3 variables}Let $p$ be a homogeneous polynomial in three variables.  Suppose $p=sq$ and that $\Gamma(q)$ is connected. Then
\begin{equation*}\label{eq: p-degree estimate}
\rho(p) \geq \frac{\pi(p)+5}{2}.
\end{equation*}
\end{theorem}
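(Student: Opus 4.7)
My plan is to translate the rank estimate into a Hilbert-function bound on a monomial ideal and combine Macaulay's theorem with the graph-theoretic hypothesis. First, I would reduce to the case where $q$ has no common monomial factor by dividing out the gcd; this leaves $\rho(sq)$ unchanged and makes $\pi(p)=\deg(sq)=d$ where $\deg q = d-1$, so it suffices to show $\rho(sq) \geq (d+5)/2$. Let $I \subseteq R = \mathbb{C}[x_1,x_2,x_3]$ be the monomial ideal generated by the $k$ monomials of $q$, all of degree $d-1$. Since the minimal first syzygies of $I$ start in degree $d$, Proposition~\ref{prop: rel between Betti numbers and Hilbert} specializes to
\begin{equation*}
H_I(d) = 3k - \beta_{1,d}.
\end{equation*}

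For the counting step, each $x^c \in (I)_d$ is hit by exactly $t(c) \in \{1,2,3\}$ of the $3k$ products $x_j x^{a_i}$, and the coefficient of $x^c$ in $sq$ is automatically nonzero whenever $t(c)=1$. Writing $A_t=|\{c : t(c)=t\}|$, the identities $\sum_t A_t = H_I(d)$ and $\sum_t t A_t = 3k$ give $A_1 = 2H_I(d) - 3k + A_3 \geq 2H_I(d) - 3k$, hence $\rho(sq) \geq A_1 \geq 2H_I(d) - 3k$. Applying Macaulay's estimate (Theorem~\ref{thm: Macaulay}) in the form $H_I(d) \geq M_{3,d-1}(k)$ yields $\rho(sq) \geq 2M(k) - 3k$. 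Next, the hypotheses on $\Gamma(q)$ enter: the no-common-factor condition forces a monomial of $q$ on each of the three axis faces of the degree-$(d-1)$ triangular simplex, while connectedness of $\Gamma(q)$ requires a path in the Newton lattice linking those monomials. A graph-distance computation in the triangular lattice shows that any such configuration has at least $d$ vertices, so $k \geq d$.

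The main obstacle will be closing the gap between $2M(k) - 3k$ and $(d+5)/2$ in the regime where $k$ is close to $d$: for instance, at $d=k=3$ the Macaulay bound alone gives only $\rho \geq 3$, one short of the required $4$, and analogous slack appears in other small cases. I expect to close the gap in two complementary ways. First, whenever the configuration forces a triple collision at some interior degree-$d$ monomial (the ``triangle'' support $\{x_1x_2,x_1x_3,x_2x_3\}$ in degree two being the prototypical example), the term $A_3$ supplies an extra surviving monomial. Second, for each $i$ the restriction $s_i q_i := (x_{j_1}+x_{j_2})\,q|_{x_i=0}$ is a two-variable product whose rank contributes guaranteed surviving monomials on the face $\{c_i=0\}$ of the Newton simplex of $sq$; summing the three face contributions and correcting for the at-most-three corners $x_l^d$ that are double-counted produces additional nonzero terms beyond the interior Macaulay count. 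Handling the remaining small-$k$ configurations via this boundary analysis completes the proof.
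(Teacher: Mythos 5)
Your bound $\rho(sq)\geq A_1$ (the number of degree-$d$ monomials hit by \emph{exactly one} product $x_jx^{a_i}$) is correct but far too weak, and the gap is not a ``small-cases'' nuisance to be mopped up: it is off by roughly $(d-1)^2/2$. Using your notation, $A_1 \geq 2H_I(d)-3k = 3k-2\gamma_{1,d}$ (where $\gamma_{1,d}$ is the number of degree-$d$ minimal syzygies of $I=I_{f\oplus g}$), and in the ``full'' case $k=\binom{d+1}{2}$ this expression equals $\tfrac{-d^2+3d+4}{2}$, which is negative as soon as $d\geq 4$. Even for $d=2$, $k=3$ it gives $3<4$. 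What you are missing is that a monomial $x^c$ hit by two or three products can \emph{also} be guaranteed to survive, as long as all the products come from monomials of the same sign --- positive coefficients cannot cancel positive coefficients. The paper exploits exactly this: with $\alpha,\beta,\gamma$ the Betti numbers of $I_f$, $I_g$, $I_{f\oplus g}$, it proves $\rho(sq)\geq 3\gamma_{0,d-1}-2\gamma_{1,d}+\alpha_{1,d}+\beta_{1,d}$, and the whole substance of the argument is a lower bound $\alpha_{1,d}+\beta_{1,d}\geq \tfrac{(d-1)^2}{2}$, obtained by partitioning the degree-$d$ multi-indices into ``interior'' triples $T$ and ``edge'' pairs $E$, splitting each into $+,-,0$ classes according to the signs of the neighboring monomials, and showing that in each triangle of divided Koszul relations at most two can be ``mixed.'' Your plan drops the $\alpha+\beta$ term entirely, so no Macaulay refinement or lower bound on $k$ will recover the required estimate.

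Two further issues. First, you bypass the reduction to a \emph{full} $q$ (the paper's Lemma~\ref{lemma: filling in}), but that reduction is what makes the syzygy count tractable: once $q$ is full, $\gamma_{0,d-1}=\binom{d+1}{2}$ and $\gamma_{1,d}=d^2-1$ are explicit and the whole estimate localizes to the sign pattern. Your alternative route via ``$k\geq d$'' (which I would not take for granted --- connectivity plus the no-common-factor hypothesis gives monomials on each face and a connecting path, but the claimed lower bound on $|\sA\cup\sB|$ needs proof) cannot substitute for this, because it feeds into a formula that is already too lossy. Second, your fallback plan via restrictions to the three coordinate faces is genuinely vague: it is not clear that the contributions from the three two-variable slices plus the interior count are additive, or that the corner double-counting correction leaves enough to reach $\tfrac{d+5}{2}$. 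I would recommend you start from the paper's inequality $\rho(sq)\geq n\gamma_{0,d-1}-2\gamma_{1,d}+\alpha_{1,d}+\beta_{1,d}$ and concentrate on the syzygy bookkeeping that lower-bounds $\alpha_{1,d}+\beta_{1,d}$.
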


For $q(x) = \sum_a c_a x^a$ a homogeneous polynomial of
degree $d-1$, we are interested in replacing the Newton diagram
with the Betti numbers of associated ideals.  Let $\sA$ be the set
of multi-indices $a$ for which $c_a > 0$, let $\sB$ be the set of
multi-indices for which $c_a < 0$.  Consider ideals $I_f=\langle x^a \rangle_{a
\in \sA}$, $I_g = \langle x^a \rangle_{a \in \sB}$, and $I_{f \oplus g}= \langle x^a \rangle_{a \in \sA \cup \sB}$.  This notation is deliberately chosen to agree with the notation for the ideals formed from the functions appearing in the holomorphic decomposition of $r(z,\bar{z})$, for if the $q$ under consideration was formed through the change of variables described above, these ideals are precisely the same as the ideals considered earlier, except that the variable is now called $x$ instead of $z$.

We will consider the three sets of Betti numbers,
$\{\alpha_{i,j}\}$, $\{\beta_{i,j}\}$, and $\{\gamma_{i,j}\}$
associated with the ideals $I_f$, $I_g$ and $I_{f \oplus g}$, respectively.
\begin{prop}\label{prop:lower bd on rho in terms of betti}
With notation as above,
\begin{equation*}\label{eq: key inequality on betti numbers}
\rho(sq) \geq n\gamma_{0,d-1}-2\gamma_{1,d}+\alpha_{1,d}+\beta_{1,d}.
\end{equation*}
\end{prop}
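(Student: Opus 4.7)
The plan is to reduce the inequality to a bookkeeping statement about the monomial fibres of the multiplication map $(j,a)\mapsto x_j x^a$ and then interpret the right-hand side of that statement as a combination of Betti numbers.

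First I would apply Proposition~\ref{prop: rel between Betti numbers and Hilbert} to each of the three ideals $I_f$, $I_g$, $I_{f\oplus g}$. Since all the generators sit in a single degree $d-1$, the only nonzero terms $B_j$ at or below degree $d$ are $B_{d-1}=\gamma_{0,d-1}$ and $B_d=-\gamma_{1,d}$ (higher syzygies are forced into degree $\geq d+1$). This yields the clean formulas
\begin{equation*}
H_{I_{f\oplus g}}(d)=n\gamma_{0,d-1}-\gamma_{1,d},\qquad H_{I_f}(d)=n\alpha_{0,d-1}-\alpha_{1,d},\qquad H_{I_g}(d)=n\beta_{0,d-1}-\beta_{1,d}.
\end{equation*}

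Next, I would rewrite $\rho(sq)$ as $H_{I_{f\oplus g}}(d)-c$, where $c$ counts the monomials in $(I_{f\oplus g})_d$ whose coefficient in $sq$ vanishes after cancellation. For each monomial $m$ of degree $d$, let $F_\sA(m)=\{(j,a)\in\{1,\dots,n\}\times\sA:x_jx^a=m\}$ and define $F_\sB(m)$ analogously. The coefficient of $m$ in $sq$ is $\sum_{(j,a)\in F_\sA(m)} c_a+\sum_{(j,b)\in F_\sB(m)} c_b$. Since the $c_a$ ($a\in\sA$) are strictly positive and the $c_b$ ($b\in\sB$) strictly negative, cancellation can occur only when both $|F_\sA(m)|\geq 1$ and $|F_\sB(m)|\geq 1$. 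Hence $c$ is at most the number of such ``mixed'' monomials.

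The last step is to identify that mixed count with $\gamma_{1,d}-\alpha_{1,d}-\beta_{1,d}$. Using the description of degree-$d$ syzygies via divided Koszul relations, the degree-$d$ component of the first syzygy module of a monomial ideal generated in degree $d-1$ has dimension $\sum_m (|F_\cdot(m)|-1)^+$. This gives $\gamma_{1,d}=\sum_m \max(|F_\sA(m)|+|F_\sB(m)|-1,0)$, together with the analogous expressions for $\alpha_{1,d}$ and $\beta_{1,d}$. A one-line case check on $(a,b)=(|F_\sA(m)|,|F_\sB(m)|)$ shows
\begin{equation*}
\max(a+b-1,0)-\max(a-1,0)-\max(b-1,0)=\begin{cases}1&a\geq 1\text{ and }b\geq 1,\\ 0&\text{otherwise,}\end{cases}
\end{equation*}
so summing over $m$ gives $\gamma_{1,d}-\alpha_{1,d}-\beta_{1,d}=\#\{m:|F_\sA(m)|\geq 1,\,|F_\sB(m)|\geq 1\}\geq c$. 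Combining with step~1 yields
\begin{equation*}
\rho(sq)=n\gamma_{0,d-1}-\gamma_{1,d}-c\geq n\gamma_{0,d-1}-2\gamma_{1,d}+\alpha_{1,d}+\beta_{1,d}.
\end{equation*}

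I expect no serious obstacle: the only subtle point is the identification of the degree-$d$ part of a first syzygy module with the fibre counts, and this follows from Hilbert-Betti bookkeeping plus the fact that the minimal syzygies of a monomial ideal generated in one degree are divided Koszul relations. The rest is elementary combinatorics.
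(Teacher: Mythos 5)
Your proof is correct, and the final estimate is identical to the paper's; you arrive at it by a slightly different decomposition. The paper bounds $\rho(sq)$ below by the number of monomials lying in exactly one of $(I_f)_d$ and $(I_g)_d$, namely $2H_{I_{f\oplus g}}(d)-H_{I_f}(d)-H_{I_g}(d)$, then substitutes Proposition~\ref{prop: rel between Betti numbers and Hilbert} for all three ideals and simplifies with $\gamma_{0,d-1}=\alpha_{0,d-1}+\beta_{0,d-1}$. You instead write $\rho(sq)=H_{I_{f\oplus g}}(d)-c$, bound $c$ above by the intersection count $\abs{(I_f)_d\cap(I_g)_d}$, and identify that intersection combinatorially with $\gamma_{1,d}-\alpha_{1,d}-\beta_{1,d}$ via the fibre sizes $\bigl(|F_\sA(m)|,|F_\sB(m)|\bigr)$. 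Since the symmetric difference and the intersection sum to $H_{I_{f\oplus g}}(d)$, the two lower bounds coincide. The extra step you carry out is in effect a hands-on re-derivation, for ideals generated by distinct monomials in a single degree $d-1$, of the degree-$d$ case of Proposition~\ref{prop: rel between Betti numbers and Hilbert}: the dimension of the degree-$d$ piece of the first syzygy module is $\sum_m (|F(m)|-1)^+$, and this equals the minimal-generator count $\gamma_{1,d}$ because no syzygies, and hence no relations among syzygies, occur below degree $d$. This buys an explicit combinatorial meaning for $\gamma_{1,d}-\alpha_{1,d}-\beta_{1,d}$ at the cost of a bit more bookkeeping; the paper gets the same cancellation purely by Hilbert-function algebra. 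Both arguments rely on the observation (you state it; the paper leaves it implicit) that for such ideals the only Betti numbers contributing to $B_{d-1}$ and $B_d$ are $\gamma_{0,d-1}$ and $-\gamma_{1,d}$.
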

\begin{proof}
All generators of $I_{f \oplus g}$, $I_f$, and $I_g$ are monomials of degree
$d-1$.  The monomials that could potentially arise in $sq$ are
precisely the monomials in $(I_{f \oplus g})_d$.  The number of such monomials is
$H_{I_{f \oplus g}}(d)$.

Not all potential monomials need actually appear in $sq$ with
non-zero coefficient.  We obtain a lower bound on $\rho(sq)$ if we
count only those monomials in $(I_g)_d$ but not $(I_f)_d$ or in
$(I_f)_d$ but not $(I_g)_d$.  Doing so gives
\begin{eqnarray*}
\rho(sq)&\geq&H_{I_{f \oplus g}}(d) - H_{I_g}(d)    + H_{I_{f \oplus g}}(d) - H_{I_f}(d)\\
&=&2\left[\gamma_{0,d-1} \binom{d-(d-1)+n-1}{n-1} - \gamma_{1,d} \binom{d-d+n-1}{n-1}   \right] \\
& & {} - \left[\alpha_{0,d-1} \binom{d-(d-1)+n-1}{n-1} - \alpha_{1,d} \binom{d-d+n-1}{n-1}   \right]\\
 & &{}- \left[\beta_{0,d-1} \binom{d-(d-1)+n-1}{n-1} - \beta_{1,d} \binom{d-d+n-1}{n-1}   \right]\\
&=&n\gamma_{0,d-1}-2\gamma_{1,d}+\alpha_{1,d}+\beta_{1,d}.
\end{eqnarray*}
\end{proof}

We will use this proposition in the special case in which $n=3$ and $q$ has maximal rank.  In that case, $\gamma_{0,d-1}=\binom{d+1}{2}$ and $H_{I_{f \oplus g}}(d) = \binom{d+2}{2}$, and so $\gamma_{1,d}=d^2-1$.

The idea of the proof of Proposition \ref{prop:lower bd on rho in terms of betti} leads to an important definition:
\begin{definition}
Let
\begin{equation*}
\#(sq)=H_{I_{f \oplus g}}(d)-H_{I_f}(d) + H_{I_{f \oplus g}}(d)-H_{I_g}(d).
\end{equation*}
$\#(sq)$ counts monomials in $(I_{f\oplus g})_d$ not in $(I_f)_d$ and monomials in
$(I_{f \oplus g})_d$ not in $(I_g)_d$. To be consistent with Lebl and Peters, we
call such monomials {\it nodes}.
\end{definition}

In order to use this result to obtain a new proof of Theorem \ref{thm:LP for 3 variables}, we must be able to identify minimal generating sets for the first syzygy module $M_1$ of $I_{f \oplus g}$.  The next lemma is the key.
\begin{lemma}\label{lemma: dependent sets}
Suppose $n=3$ and suppose $I \subset \C[x_1,x_2,x_3]$ is generated by
monomials of degree $d-1$.  Let $A$ be a multi-index of
length $d$ in which no component is zero.  Then
$\sigma(A-e_1,A-e_2)$,
$\sigma(A-e_1,A-e_3)$, and
$\sigma(A-e_2,A-e_3)$ are
dependent, but any pair are independent.
\end{lemma}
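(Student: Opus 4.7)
The plan is to compute all three syzygies explicitly using the formula given in the paper and then verify both claims by direct inspection in the free module $F_0$.

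First, I would observe that since no component of the multi-index $A$ is zero, for any distinct $i,j \in \{1,2,3\}$ the multi-index $A-e_i-e_j$ has nonnegative entries, so the gcd computation is straightforward: $\gcd(x^{A-e_i}, x^{A-e_j}) = x^{A-e_i-e_j}$. Dividing, $\frac{x^{A-e_j}}{\gcd} = x_i$ and $\frac{x^{A-e_i}}{\gcd} = x_j$, so the divided Koszul relation takes the very clean form
\begin{equation*}
\sigma(A-e_i, A-e_j) = x_i\,\varepsilon(A-e_i) - x_j\,\varepsilon(A-e_j).
\end{equation*}

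For the dependence claim, I would simply verify that
\begin{equation*}
\sigma(A-e_1, A-e_2) - \sigma(A-e_1, A-e_3) + \sigma(A-e_2, A-e_3) = 0,
\end{equation*}
which follows immediately from the three formulas above since all six terms cancel in pairs. This is a nontrivial $R$-linear relation (coefficients $1,-1,1$), so the three elements are dependent.

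For the pairwise independence, consider without loss of generality the pair $\sigma(A-e_1, A-e_2)$ and $\sigma(A-e_1, A-e_3)$. Suppose $s,t \in R$ satisfy $s\,\sigma(A-e_1, A-e_2) + t\,\sigma(A-e_1, A-e_3) = 0$ in $F_0$. Since the basis elements $\varepsilon(A-e_1),\varepsilon(A-e_2),\varepsilon(A-e_3)$ of $F_0$ are distinct, comparing coefficients of $\varepsilon(A-e_2)$ yields $sx_2 = 0$ in $R$ (hence $s=0$), and comparing coefficients of $\varepsilon(A-e_3)$ yields $tx_3 = 0$ (hence $t=0$). The other two pairs are handled symmetrically. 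The whole argument is essentially a direct computation in the free module; the only place where the hypotheses enter is in ensuring that the multi-indices $A-e_i$ and $A-e_i-e_j$ are genuine exponents of monomials, so I expect no real obstacle beyond bookkeeping.
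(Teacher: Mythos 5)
Your proof is correct and follows the same route as the paper's: write out the divided Koszul relations explicitly (which reduce to the clean form $x_i\varepsilon(A-e_i) - x_j\varepsilon(A-e_j)$ because no component of $A$ vanishes), observe the obvious telescoping relation for dependence, and compare coefficients on the distinct basis elements $\varepsilon(A-e_k)$ for pairwise independence. In fact you are slightly more complete than the paper, which dismisses pairwise independence as clear; your coefficient-comparison argument fills that in.
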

\begin{proof}
We have
\begin{eqnarray*}
\sigma(A-e_1,A-e_2)&=&x_1 \varepsilon(A-e_1) - x_2 \varepsilon(A-e_2)\\
\sigma(A-e_1,A-e_3)&=&x_1 \varepsilon(A-e_1) - x_3 \varepsilon(A-e_3)\\
\sigma(A-e_2,A-e_3)&=&x_2\varepsilon(A-e_2) - x_3\varepsilon(A-e_3).
\end{eqnarray*}
Clearly
$\sigma(A-e_1,A-e_3)-\sigma(A-e_1,A-e_2)=\sigma(A-e_2,A-e_3)$.
\end{proof}

The proof of Theorem \ref{thm:LP for 3 variables} involves two steps. We first need a lemma showing that it suffices to prove the result for $q$ full.  Next,
in order to make use of Proposition \ref{prop:lower bd on rho in terms of betti}, we must obtain a good lower bound for $\alpha_{1,d}+\beta_{1,d}$.

\begin{lemma}\label{lemma: filling in}
Suppose that $q$ is a homogeneous polynomial of degree $d-1$ in three variables, that the terms of $q$ have no common monomial factor of positive degree, and that $\Gamma(q)$ is
connected. Then there exists a homogeneous polynomial $q'$ of degree $d-1$with maximal rank , with
$\mathcal{A} \subseteq \mathcal{A}'$ and $\mathcal{B} \subseteq
\mathcal{B}'$, and with
\begin{equation*}
\#(sq) \geq \#(sq').
\end{equation*}
\end{lemma}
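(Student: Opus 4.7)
The plan is to argue by induction on the number $m := \binom{d+1}{2} - |\sA \cup \sB|$ of multi-indices of length $d-1$ missing from the support of $q$. If $m = 0$, take $q' = q$. Otherwise I will insert one missing multi-index into the support in such a way that $\#$ does not increase and the hypotheses (connected Newton diagram, no common monomial factor) are preserved; then the induction hypothesis applied to the enlarged polynomial completes the argument.

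For the inductive step, fix a missing multi-index $c$ and analyze the local effect of inserting $x^c$ into either $\sA$ or $\sB$. Since $\#(sq) = |(I_f)_d \triangle (I_g)_d|$, the change can be computed directionwise: for each $i \in \{1,2,3\}$, classify $x_i x^c$ by which of $(I_f)_d, (I_g)_d$ contains it, and let $t_1, a, b, k$ count the four cases (in both; only in $(I_f)_d$; only in $(I_g)_d$; in neither), so $t_1 + a + b + k = 3$. A direct computation gives
\begin{equation*}
\Delta_A = k - b, \qquad \Delta_B = k - a,
\end{equation*}
whence $\min(\Delta_A, \Delta_B) = k - \max(a, b)$. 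A short case analysis shows $\min(\Delta_A, \Delta_B) \leq 0$ precisely when either $k = 0$, or $k = 1$ and $t_1 \leq 1$. It suffices, then, to find a missing $c$ with this favorable profile.

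Geometrically, $\sS := \sA \cup \sB$ sits inside the triangular lattice $\{a \in \N_0^3 : |a| = d-1\}$, where $a$ and $b$ are adjacent iff $b = a + e_i - e_j$ (with $a_j \geq 1$), and $x_i x^c \in (I_{f \oplus g})_d$ exactly when $c$ has an $\sS$-neighbor of the form $c + e_i - e_j$ for some $j \neq i$ with $c_j \geq 1$. Connectedness of $\Gamma(q)$ is connectedness of $\sS$ in this lattice, and the no-common-factor hypothesis forces $\sS$ to meet each of the three edges $\{a_i = 0\}$ of the simplex. Walking along the outer boundary of $\sS$, I would show that whenever $\sS^c \neq \emptyset$ there is a missing vertex $c$ whose $\sS$-neighbors witness at least two of the three index directions; the degenerate configuration $k = 1$, $t_1 = 2$ (both witnesses lying in $(I_f)_d \cap (I_g)_d$) can be sidestepped by advancing the boundary walk to an adjacent missing vertex, or, failing that, by swapping the roles of $\sA$ and $\sB$, which leaves $\#$ unchanged. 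Preservation of the hypotheses after augmenting $\sS$ by $\{c\}$ is immediate: the no-common-factor condition can only improve, and connectedness of $\Gamma$ follows because $c$ is by construction lattice-adjacent to $\sS$.

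The main obstacle is precisely this boundary/case-analysis step: producing a favorable $c$ from only the connectedness and non-degeneracy of $\sS$. The hexagonal geometry of the triangular lattice and the fact that $\sS$ meets each side of the simplex give plenty of room, but a careful finite case split on how $\sS^c$ relates to the corners, edges, and interior of the simplex is required; the sign flexibility in placing $x^c$ into $\sA$ versus $\sB$ is what closes the last borderline configurations.
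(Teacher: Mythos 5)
Your reduction is sound and is essentially the same route the paper refers to: the paper itself omits the proof, deferring to the outline in Lebl--Peters, and your ``fill in one missing multi-index at a time'' induction is that approach. The bookkeeping is correct: writing $\#(sq)$ as $|(I_f)_d \triangle (I_g)_d|$, you get $\Delta_A = k - b$ and $\Delta_B = k - a$, and the favorable profile for a missing $c$ is exactly ``$k=0$, or $k=1$ and $t_1 \leq 1$.'' Any $c$ with $k \leq 2$ is lattice-adjacent to $\sS = \sA \cup \sB$, so inserting it preserves connectedness of $\Gamma$, and the no-common-factor condition persists since $\sS$ only grows; the resulting chain of inclusions gives $\sA \subseteq \sA'$ and $\sB \subseteq \sB'$ and terminates at a full $q'$.

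The gap, which you yourself flag, is the existence claim that drives every step of the induction: that a favorable missing $c$ can always be found. That is the entire content of the lemma and is not established here; you sketch a boundary walk but do not carry out the case analysis. Worse, one of your two proposed remedies for the degenerate configuration $k=1$, $t_1 = 2$ is vacuous. Swapping the roles of $\sA$ and $\sB$ exchanges $a$ and $b$ but fixes $k$, $t_1$, and $\max(a,b)$, so it cannot turn an unfavorable $c$ into a favorable one for any fixed $c$; the quantity $\min(\Delta_A,\Delta_B) = k - \max(a,b)$ is symmetric in $\sA,\sB$. Thus the only genuine escape is to locate a different boundary vertex, and the assertion that one always exists under the standing hypotheses is precisely the combinatorial lemma that still has to be proved. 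To close the argument you would need to show that a connected $\sS$ meeting all three faces of the simplex always admits a missing lattice vertex with at least two witnessed directions, and then either rule out the all-$t_1$ subcase or absorb it by a more refined accounting of $\Delta$ (for instance, tracking how the set of favorable vertices changes as $\sS$ grows, or arguing globally rather than one vertex at a time).
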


Lebl and Peters outline a proof of this lemma. Although it is possible to give a proof that replaces the Newton diagram with more algebraic notation and terminology,  our proof is not fundamentally different and so we omit the details.

We now give a new proof of Theorem \ref{thm:LP for 3 variables}.
\begin{proof}
It suffices to prove the theorem for polynomials $p$ of degree $d$ whose terms have no common monomial factor of positive degree, for if $p$ does not have this property we may replace it with a new polynomial that does have this property and that has degree equal to the $\pi$-degree of $p$.  By Lemma \ref{lemma: filling in}, it also suffices to prove the theorem for $q$ of maximal rank. In light of Proposition \ref{prop:lower bd on rho in terms of betti}, we need only find a sharp lower bound for $\alpha_{1,d}+\beta_{1,d}$.  We drop subscripts for the remainder of the proof.

Because the ideal $I_{f \oplus g}=\langle x^a\rangle_{a \in \mathcal{C}}$ includes all monomials of degree $d-1$, the set consisting of  divided Koszul relations $\sigma(a,b)$ of degree $d$ generates $M_1$, the first syzygy module of $I_{f \oplus g}$.  The set of all such syzygies is dependent; it has only $\gamma_{1,d}=d^2-1$ independent elements. Define two sets of multi-indices {\it of length $d$}:
\begin{eqnarray*}
T&:=&\{A:|A|=d\;\text{and $A_j>0$ for all j}\}\\
E&:=&\{A:|A|=d\;\text{and precisely one component $A_j$ is 0}\}.
\end{eqnarray*}
By Lemma \ref{lemma: dependent sets}, if $A \in T$, only two of the three degree $d$ relations $\sigma(A-e_1,A-e_2)$, $\sigma(A-e_1,A-e_3)$, and $\sigma(A-e_2,A-e_3)$ are independent.  Because $T$ has $\binom{d-1}{2}$ elements, we obtain a set of $2|T|=2\binom{d-1}{2}=d^2-3d+2$ independent syzygies of this type.  Next, take $A \in E$.  Thus $A$ has a zero component, and two nonzero components $A_j$ and $A_k$.  We obtain a single degree $d$ syzygy associated with this $A$, namely $\sigma(A-e_j,A-e_k)$.  Because $E$ has $3(d-1)$ elements, we obtain $|E|=3(d-1)$ degree $d$ syzygies of this type.  Note that,
$$d^2-3d+2-3(d-1)=d^2-1=\gamma_{1,d}, $$
and so we have described a complete generating set for $M_1$.

Let $T^+$ be the subset of $T$ consisting of those $A$ for which $A-e_1, A-e_2,A-e_3 \in \sA$, with a similar definition for $T^-$, and let  $T^0$ be the set of $A$ in $T$ such that at least one of $A-e_1,A-e_2,A-e_3$ is in $\sA$ and at least one is in $\sB$.  We define subsets $E^+$, $E^-$, and $E^0$ of $E$ in a similar manner. The key observation is that these sets give us a way to keep track of how many independent relations $\sigma(a,b)$ involve two elements of $\sA$, how many involve two elements of $\sB$, and how many involve an element of $\sA$ and an element of $\sB$.  Thus if $A \in T^+$, for example, $A-e_j \in \sA$ for $j=1,2,3$, and so all three relations $\sigma(A-e_1,A-e_2)$, $\sigma(A-e_1,A-e_3)$, $\sigma(A-e_2,A-e_3)$ are degree $d$ relations among generators of $I_f$ and are hence relevant in the count $\alpha(=\alpha_{1,d})$.  Because the three relations are dependent but any two are independent, the total contribution to $\alpha$ associated with this element of $T^+$ is two. And thus we see that
\begin{equation*}
\gamma_{1,d}=|E^0|+|E^+| +|E^-| + 2(|T^0| + |T^+| +|T^-|)
\end{equation*}
and
\begin{eqnarray}
\alpha + \beta &=& |E^+| + |E^-| + 2|T^+| +2|T^-| +|T^0|\nonumber\\
&=& \gamma_{1,d}-(|E^0| + |T^0|)\nonumber\\
&=&\gamma_{1,d}-\frac{1}{2}|E^0| -\frac{1}{2}(|E^0|+2|T^0|).\label{eq: expression for alpha + beta}
\end{eqnarray}
Clearly $|E^0| \leq |E|=3(d-1)$.  To obtain an upper bound for $|E^0|+2|T^0|$, we note that this quantity can be interpreted as the total number of independent degree $d$ relations $\sigma(a,b)$ for which one multi-index is in $\sA$ and one is in $\sB$. We count these relations in a different way.  Observe that in every triple of the form $\{b+e_1,b+e_2,b+e_3: |b|=d-2 \}$, at most two of the three relations $\{\sigma(b+e_1,b+e_2),\sigma(b+e_1,b+e_3), \sigma(b+e_2,b+e_3) \}$ can involve one multi-index in $\sA$ and one multi-index in $\sB$.   Because there are precisely $\binom{d}{2}=\frac{1}{2}(d^2-d)$ such triples,
\begin{equation*}
|E^0| + 2|T^0| \leq 2 \binom{d}{2} = d^2 - d.
\end{equation*}
Thus
\begin{eqnarray*}
\rho(sq) &\geq& 3\gamma_{0,d-1}-2\gamma_{1,d}+\alpha+\beta \\
&\geq&3\gamma_{0,d-1}-2\gamma_{1,d}+\gamma_{1,d}-\frac{1}{2}|E^0|-\frac{1}{2}(|E^0|+2|T^0|)\\
&\geq&H_{I_{f \oplus g}}(d)-\frac{3}{2}(d-1)-\frac{1}{2}(d^2 - d)\\
&=&\binom{d+2}{2}-\frac{3}{2}(d-1)-\frac{1}{2}(d^2-d)\\
&=&\frac{d+5}{2}.
\end{eqnarray*}
\end{proof}

\section{\label{sec:finishing proof} Proof of Theorem \ref{thm:SOS for diagonal r in 3 var}}

Suppose $r(z,\bar{z} )$ is a real bihomogeneous polynomial in three variables with bi-degree $(d-1,d-1)$ and diagonal coefficient matrix. Suppose $r(z,\bar{z})\norm{z}^2=\norm{h(z)}^2$.  We must show that the rank $\rho$ of $\norm{h(z)}^2$ is at least $M(k_0+1)=5$.

The results of Section \ref{sec:small numbers of negs} establish the result for such $r$ with signature pair $(P,N)$ for $0 \leq N \leq 2$.  Thus we need only consider  $N\geq 3$.  Because we want to use the results of Section \ref{sec:related ques}, for the remainder of the paper we will consider the equivalent formulation in terms of real homogeneous polynomials discussed in Section \ref{sec:related ques}.  Thus we assume $q$ is a polynomial of degree $d-1$ in three variables, and we prove that $p=sq$ has rank at least 5. The signature pair $(P,N)$ now tells us the numbers of positive and negative coefficients in $q$.  We need only consider $N\geq 3$.

Consider such a $q$.  Its Newton diagram may or may not be connected. If it is not connected, then the Newton diagram is made up of a number of connected components, each corresponding to a homogeneous polynomial $q_j$ of degree $d-1$.  Clearly $q=\sum q_j$.  Also,
$$\rho(sq)=\sum \rho(sq_j).$$
This identity is true because, if $j \neq j'$, no term in $sq_j$ can equal a term in $sq_{j'}$, for if there were such a term, it would imply a syzygy of degree $d$ between a monomial in $q_j$ and a monomial in $q_{j'}$. The existence of such a syzygy would contradict the assumption that the underlying monomials are in different components of $\Gamma(q)$.  In light of this observation, it suffices to prove our assertion about $\rho(sq)$ under the hypothesis that $q$ has a connected Newton diagram. We make this assumption for the remainder of the proof.

Theorem \ref{thm:LP for 3 variables} now gives the result if $\pi(p)\geq 5$, for then $\rho(p) \geq \frac{5+5}{2}=5$.  Thus we need only consider $p$ with $\pi(p) \leq 4$.  Because $p=sq$, all such $p$ correspond to a polynomial $q$ with $\pi(q) \leq 3$.  By dividing out any common factor, we may assume that $q$ itself has degree at most 3.

We treat the few remaining cases for which $q$ has degree at most $3$ and $N$ is at least 3.  Although this part of the argument is tedious, it is not difficult.
Observe, there are $10$ monomials of degree $3$ in $x_1$, $x_2$, and $x_3$.  $I_g$ can not contain any of the monomials $x_j^3$, for if it did, $x_j^4$ would be in $(I_g)_4$ but not in $(I_f)_4$.  Furthermore, $I_g$ can contain at most one of $x_j^2 x_k$ and $x_j x_k^2$, for if it contained both, $x_j^2 x_k^2$ would be an element of $(I_g)_4$ that could not be in $(I_f)_4$.  Let
\begin{eqnarray*}
S_1&=&\{x_1^2 x_2, x_1 x_2^2\}\\
S_2&=&\{x_1^2 x_3, x_1 x_3^2  \}\\
S_3&=&\{x_2^2 x_3, x_2 x_3^2\}.
\end{eqnarray*}
For our first several cases, $I_g$ contains precisely one element of each of the sets $S_1$, $S_2$, $S_3$.  Although our arguments only use a careful enumeration of the monomials in various components of $I_f$ and $I_g$ and the fact that $(I_g)_4 \subseteq (I_f)_4$, we will show the monomials in question on Newton diagrams as a visual aid. See Figure \ref{fig:newton} for the labeling convention in the Newton diagram.

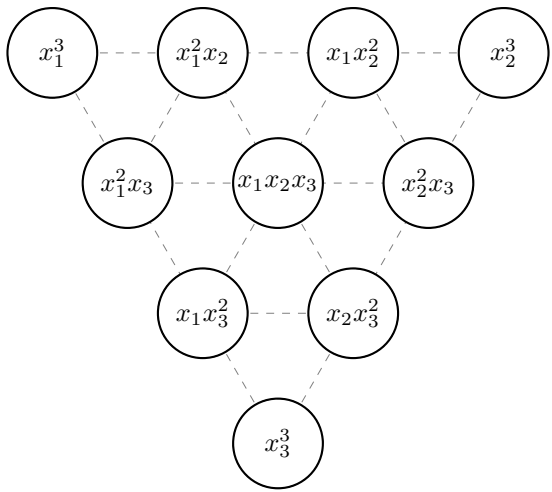
\begin{figure}[h!]
\centering
\begin{tikzpicture}

\begin{scope}[y=(60:2),x=(0:2)]

\draw[help lines, dashed]
 (0,0) -- (0,3)
 (-1,1) -- (-1,3)
 (-2,2) -- (-2,3)
 (0,0) -- (-3,3)
 (0,1) -- (-2,3)
 (0,2) -- (-1,3)
 (-3,3) -- (0,3)
 (-2,2) -- (0,2)
 (-1,1) -- (0,1);

\foreach \y  in {0,...,3}{
\pgfmathtruncatemacro\xmin{int(-1*\y)}
  \foreach \x in
    {\xmin,...,0}{
    \node[draw,circle,minimum size=6,inner sep=12, fill, color=white] at (\x,\y) {};
    \node[draw,circle,minimum size=6,inner sep=12, dashed] at (\x,\y) {};
  }
}

\node at (0,0) {$x_3^3$};
\node[draw,thick, circle,minimum size=6,inner sep=12] at (0,0) {};
\node at (-1,1) {$x_1x_3^2$};
\node[draw,thick, circle,minimum size=6,inner sep=12] at (-1,1) {};
\node at (0,1) {$x_2x_3^2$};
\node[draw,thick, circle,minimum size=6,inner sep=12] at (0,1) {};
\node at (-2,2) {$x_1^2 x_3$};
\node[draw,thick, circle,minimum size=6,inner sep=12] at (-2,2) {};
\node at (-1,2) {$x_1x_2 x_3$};
\node[draw,thick, circle,minimum size=6,inner sep=12] at (-1,2) {};
\node at (0,2) {$x_2^2 x_3$};
\node[draw,thick, circle,minimum size=6,inner sep=12] at (0,2) {};
\node at (0,3) {$x_2^3$};
\node[draw,thick, circle,minimum size=6,inner sep=12] at (0,3) {};
\node at (-1,3) {$x_1 x_2^2$};
\node[draw,thick, circle,minimum size=6,inner sep=12] at (-1,3) {};
\node at (-2,3) {$x_1^2 x_2$};
\node[draw,thick, circle,minimum size=6,inner sep=12] at (-2,3) {};
\node at (-3,3) {$x_1^3$};
\node[draw,thick, circle,minimum size=6,inner sep=12] at (-3,3) {};
\end{scope}
\end{tikzpicture}
\caption{Newton Diagram}
\label{fig:newton}
\end{figure}

\subsection{Case 1.}  Suppose either
\begin{equation}\label{eq:first ideal in case 1}
I_g=\langle x_1^2 x_2, x_1 x_3^2, x_2^2 x_3\rangle
\end{equation}
or
$$I_g=\langle x_1^2 x_3, x_1 x_2^2, x_2 x_3^2 \rangle.$$
These are the same case, for the second ideal is obtained from the first simply by interchanging $x_2$ and $x_3$.  Thus for definiteness, suppose $I_g$ is given by \eqref{eq:first ideal in case 1}. Because $(I_g)_4$ includes the $6$ monomials
$$x_1^3 x_2, x_1^2 x_2^2, x_1^2 x_3^2, x_1 x_3^3, x_2^3 x_3, x_2^2 x_3^2,$$
$I_f$ must include the $6$ monomials
$$x_1^3, x_1^2 x_3, x_1 x_2^2, x_2^3,x_2 x_3^2, x_3^3 .$$
Thus, because $H_{I_g}(d)=9$,
\begin{eqnarray*}
\rho&\geq&M(P+N)-9\\
&\geq&M(9)-9=14-9=5.
\end{eqnarray*}
We illustrate the placement of positive and negative terms on the Newton diagram in Figure \ref{fig:case1}.

\begin{remark}
One can show that the rank is, in fact, at least $6$ by listing elements of $(I_f)_4$ rather than simply using $M(P+N)$ as a lower bound for $H_{I_f}(4)$.  Since we only need to show in each case that the rank is at least 5, we will not trouble ourselves to give the best possible lower bound for $\rho$.
\end{remark}

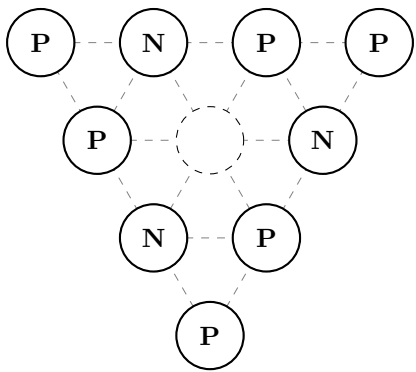
\begin{figure}[h!]
\centering
\begin{tikzpicture}[y=(60:2),x=(0:2),scale=0.75]

\draw[help lines, dashed]
 (0,0) -- (0,3)
 (-1,1) -- (-1,3)
 (-2,2) -- (-2,3)
 (0,0) -- (-3,3)
 (0,1) -- (-2,3)
 (0,2) -- (-1,3)
 (-3,3) -- (0,3)
 (-2,2) -- (0,2)
 (-1,1) -- (0,1);

\foreach \y  in {0,...,3}{
\pgfmathtruncatemacro\xmin{int(-1*\y)}
  \foreach \x in
    {\xmin,...,0}{
    \node[draw,circle,minimum size=6,inner sep=9, fill, color=white] at (\x,\y) {};
    \node[draw,circle,minimum size=6,inner sep=9, dashed] at (\x,\y) {};
  }
}

\node at (0,0) {{\bf P}};
\node[draw,thick, circle,minimum size=6,inner sep=9] at (0,0) {};
\node at (-1,1) {{\bf N}};
\node[draw,thick, circle,minimum size=6,inner sep=9] at (-1,1) {};
\node at (0,1) {{\bf P}};
\node[draw,thick, circle,minimum size=6,inner sep=9] at (0,1) {};
\node at (-2,2) {{\bf P}};
\node[draw,thick, circle,minimum size=6,inner sep=9] at (-2,2) {};
\node at (0,2) {{\bf N}};
\node[draw,thick, circle,minimum size=6,inner sep=9] at (0,2) {};
\node at (0,3) {{\bf P}};
\node[draw,thick, circle,minimum size=6,inner sep=9] at (0,3) {};
\node at (-1,3) {{\bf P}};
\node[draw,thick, circle,minimum size=6,inner sep=9] at (-1,3) {};
\node at (-2,3) {{\bf N}};
\node[draw,thick, circle,minimum size=6,inner sep=9] at (-2,3) {};
\node at (-3,3) {{\bf P}};
\node[draw,thick, circle,minimum size=6,inner sep=9] at (-3,3) {};
\end{tikzpicture}
\caption{Newton Diagram for Case 1.}
\label{fig:case1}
\end{figure}

\subsection{Case 2.} This case is just a slight modification of Case 1.  Add $x_1 x_2 x_3$ to the generating set of either ideal. Although we have changed $N$, we have not changed $(I_g)_4$ because all of the monomials $x_1^2 x_2 x_3$, $x_1 x_2^2 x_3$, $x_1 x_2 x_3^2$ can also be expressed as some $x_j$ multiplied by one of the original three generators.  Thus $H_{I_g}(4) = 9$ again, and by the same argument above, the remaining $6$ monomials of degree $3$ must be in $I_f$.  Thus
\begin{eqnarray*}
\rho&\geq&M(P+N)-9\\
&\geq&M(10)-9=15-9=6.
\end{eqnarray*}

We illustrate the placement of positive and negative terms on the Newton diagram in Figure \ref{fig:case2}.

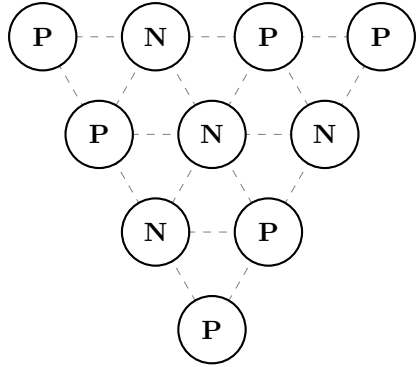
\begin{figure}[h!]
\centering
\begin{tikzpicture}[y=(60:2),x=(0:2),scale=0.75]

\draw[help lines, dashed]
 (0,0) -- (0,3)
 (-1,1) -- (-1,3)
 (-2,2) -- (-2,3)
 (0,0) -- (-3,3)
 (0,1) -- (-2,3)
 (0,2) -- (-1,3)
 (-3,3) -- (0,3)
 (-2,2) -- (0,2)
 (-1,1) -- (0,1);

\foreach \y  in {0,...,3}{
\pgfmathtruncatemacro\xmin{int(-1*\y)}
  \foreach \x in
    {\xmin,...,0}{
    \node[draw,circle,minimum size=6,inner sep=9, fill, color=white] at (\x,\y) {};
    \node[draw,circle,minimum size=6,inner sep=9, dashed] at (\x,\y) {};
  }
}

\node at (0,0) {{\bf P}};
\node[draw,thick, circle,minimum size=6,inner sep=9] at (0,0) {};
\node at (-1,1) {{\bf N}};
\node[draw,thick, circle,minimum size=6,inner sep=9] at (-1,1) {};
\node at (0,1) {{\bf P}};
\node[draw,thick, circle,minimum size=6,inner sep=9] at (0,1) {};
\node at (-2,2) {{\bf P}};
\node[draw,thick, circle,minimum size=6,inner sep=9] at (-2,2) {};
\node at (-1,2) {{\bf N}};
\node[draw,thick, circle,minimum size=6,inner sep=9] at (-1,2) {};
\node at (0,2) {{\bf N}};
\node[draw,thick, circle,minimum size=6,inner sep=9] at (0,2) {};
\node at (0,3) {{\bf P}};
\node[draw,thick, circle,minimum size=6,inner sep=9] at (0,3) {};
\node at (-1,3) {{\bf P}};
\node[draw,thick, circle,minimum size=6,inner sep=9] at (-1,3) {};
\node at (-2,3) {{\bf N}};
\node[draw,thick, circle,minimum size=6,inner sep=9] at (-2,3) {};
\node at (-3,3) {{\bf P}};
\node[draw,thick, circle,minimum size=6,inner sep=9] at (-3,3) {};
\end{tikzpicture}
\caption{Newton Diagram for Case 2.}
\label{fig:case2}
\end{figure}

\subsection{Case 3.} Suppose that either
\begin{equation*}\label{eq:first ideal in case 3}
I_g=\langle x_1^2 x_2, x_1^2 x_3, x_2^2 x_3     \rangle
\end{equation*}
or that $I_g$ is one of the five other ideals obtained by permuting the variables in this generating set. Because $(I_g)_4$ contains
$$x_1^3 x_2, x_1^2 x_2^2, x_1^2 x_3^2, x_2^3 x_3, x_2^2 x_3^2, $$
$I_f$ must contain the $5$ generators
$$x_1^3, x_1 x_2^2, x_1 x_3^2, x_2^3, x_2 x_3^2. $$
Also, because $x_1^2 x_2 x_3$ is in $(I_g)_4$ and we already have $x_1^2 x_2$ and $x_1^2 x_3$ in $I_g$, we need $x_1 x_2 x_3$ in $I_f$.  Thus $P\geq 6$.  Because $H_{I_g}(4)=8$ (because $\beta_{1,d}=1$),
$$\rho \geq M(6+3)-8=14-8=6.$$

We illustrate the placement of positive and negative terms on the Newton diagram in Figure \ref{fig:case3}.

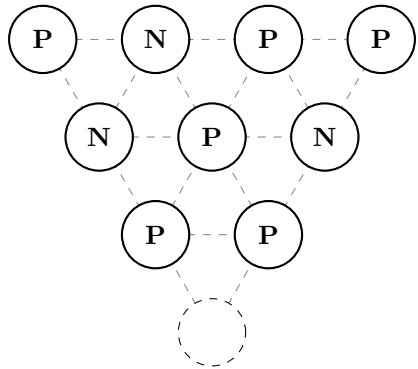
\begin{figure}[h!]
\centering
\begin{tikzpicture}[y=(60:2),x=(0:2),scale=0.75]

\draw[help lines, dashed]
 (0,0) -- (0,3)
 (-1,1) -- (-1,3)
 (-2,2) -- (-2,3)
 (0,0) -- (-3,3)
 (0,1) -- (-2,3)
 (0,2) -- (-1,3)
 (-3,3) -- (0,3)
 (-2,2) -- (0,2)
 (-1,1) -- (0,1);

\foreach \y  in {0,...,3}{
\pgfmathtruncatemacro\xmin{int(-1*\y)}
  \foreach \x in
    {\xmin,...,0}{
    \node[draw,circle,minimum size=6,inner sep=9, fill, color=white] at (\x,\y) {};
    \node[draw,circle,minimum size=6,inner sep=9, dashed] at (\x,\y) {};
  }
}

\node at (-1,1) {{\bf P}};
\node[draw,thick, circle,minimum size=6,inner sep=9] at (-1,1) {};
\node at (0,1) {{\bf P}};
\node[draw,thick, circle,minimum size=6,inner sep=9] at (0,1) {};
\node at (-2,2) {{\bf N}};
\node[draw,thick, circle,minimum size=6,inner sep=9] at (-2,2) {};
\node at (-1,2) {{\bf P}};
\node[draw,thick, circle,minimum size=6,inner sep=9] at (-1,2) {};
\node at (0,2) {{\bf N}};
\node[draw,thick, circle,minimum size=6,inner sep=9] at (0,2) {};
\node at (0,3) {{\bf P}};
\node[draw,thick, circle,minimum size=6,inner sep=9] at (0,3) {};
\node at (-1,3) {{\bf P}};
\node[draw,thick, circle,minimum size=6,inner sep=9] at (-1,3) {};
\node at (-2,3) {{\bf N}};
\node[draw,thick, circle,minimum size=6,inner sep=9] at (-2,3) {};
\node at (-3,3) {{\bf P}};
\node[draw,thick, circle,minimum size=6,inner sep=9] at (-3,3) {};
\end{tikzpicture}
\caption{Newton Diagram for Case 3.}
\label{fig:case3}
\end{figure}

We have now completed our analysis of all those cases in which $I_g$ includes one element from each of the sets $S_1$, $S_2$, $S_3$ above. We next treat the cases in which $I_g$ contains one element from each of two of the sets and does {\it not} contain either of the elements from the third set. {\it Because we are also assuming in all of these remaining cases that $N \geq 3$, in the remaining cases $I_g$ must also contain $x_1 x_2 x_3$.}

\subsection{Case 4.} Suppose that either
$$I_g = \langle x_1^2 x_2, x_1 x_2 x_3, x_1 x_3^2 \rangle $$
or that $I_g$ is one of the other five ideals obtained by permuting the variables in this generating set. Because $(I_g)_4$ contains
$$x_1^3 x_2,x_1^2 x_2^2, x_1^2 x_3^2, x_1 x_3^3    ,$$
$I_f$ must contain the $4$ generators
$$x_1^3, x_1^2 x_3, x_1 x_2^2,  x_3^3 .$$
Furthermore, because $x_1 x_2 x_3$ and $x_1 x_3^2$ are both in $I_g$ and contribute to the monomial $x_1 x_2 x_3^2$ in $(I_g)_4$, we must have $x_2 x_3^2$ in the generating set for $I_f$.  Thus $P\geq 5$. Note also that in this case, $\beta_{1,d}=2$ and so $H_{I_g}(4)=9-2=7$, and
$$\rho \geq M(5+3)-7=13-7=6.$$

We illustrate the placement of positive and negative terms on the Newton diagram in Figure \ref{fig:case4}.

\begin{figure}[h!]
\centering
\begin{tikzpicture}[y=(60:2),x=(0:2),scale=0.75]

\draw[help lines, dashed]
 (0,0) -- (0,3)
 (-1,1) -- (-1,3)
 (-2,2) -- (-2,3)
 (0,0) -- (-3,3)
 (0,1) -- (-2,3)
 (0,2) -- (-1,3)
 (-3,3) -- (0,3)
 (-2,2) -- (0,2)
 (-1,1) -- (0,1);

\foreach \y  in {0,...,3}{
\pgfmathtruncatemacro\xmin{int(-1*\y)}
  \foreach \x in
    {\xmin,...,0}{
    \node[draw,circle,minimum size=6,inner sep=9, fill, color=white] at (\x,\y) {};
    \node[draw,circle,minimum size=6,inner sep=9, dashed] at (\x,\y) {};
  }
}

\node at (0,0) {{\bf P}};
\node[draw,thick, circle,minimum size=6,inner sep=9] at (0,0) {};
\node at (-1,1) {{\bf N}};
\node[draw,thick, circle,minimum size=6,inner sep=9] at (-1,1) {};
\node at (0,1) {{\bf P}};
\node[draw,thick, circle,minimum size=6,inner sep=9] at (0,1) {};
\node at (-2,2) {{\bf P}};
\node[draw,thick, circle,minimum size=6,inner sep=9] at (-2,2) {};
\node at (-1,2) {{\bf N}};
\node[draw,thick, circle,minimum size=6,inner sep=9] at (-1,2) {};
\node at (-1,3) {{\bf P}};
\node[draw,thick, circle,minimum size=6,inner sep=9] at (-1,3) {};
\node at (-2,3) {{\bf N}};
\node[draw,thick, circle,minimum size=6,inner sep=9] at (-2,3) {};
\node at (-3,3) {{\bf P}};
\node[draw,thick, circle,minimum size=6,inner sep=9] at (-3,3) {};
\end{tikzpicture}
\caption{Newton Diagram for Case 4.}
\label{fig:case4}
\end{figure}
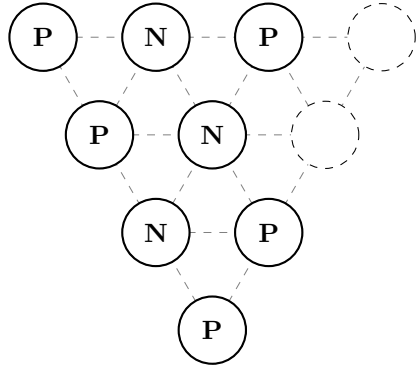

\subsection{Case 5.} Suppose that either
$$I_g = \langle x_1^2 x_2, x_1 x_2 x_3, x_2 x_3^2 \rangle $$
or that $I_g$ is one of the other two ideals obtained by permuting the variables in this generating set. Because $(I_g)_4$ contains
$$x_1^3 x_2, x_1^2 x_2^2, x_2^2 x_3^2, x_2 x_3^3 ,$$
$I_f$ must contain the $4$ monomials
$$x_1^3, x_1 x_2^2, x_2^2 x_3,x_3^3 $$
in its generating set.  Furthermore, because $(I_g)_4$ contains $x_1^2 x_2 x_3$ and $I_g$ contains both $x_1^2 x_2$ and $x_1 x_2 x_3$, the monomial $x_1^2 x_3$ must be in the generating set for $I_f$.  Similarly, because $(I_g)_4$ contains $x_1 x_2 x_3^2$ and $I_g$ contains both $x_1 x_2 x_3$ and $x_2 x_3^2$, the monomial $x_1 x_3^2$ must be in the generating set for $I_f$.  Thus $P\geq 6$, and because $\beta_{1,d}=2$,
$$\rho\geq M(6+3)-7=14-7=7 .$$

We illustrate the placement of positive and negative terms on the Newton diagram in Figure \ref{fig:case5}.

\begin{figure}[h!]
\centering
\begin{tikzpicture}[y=(60:2),x=(0:2),scale=0.75]

\draw[help lines, dashed]
 (0,0) -- (0,3)
 (-1,1) -- (-1,3)
 (-2,2) -- (-2,3)
 (0,0) -- (-3,3)
 (0,1) -- (-2,3)
 (0,2) -- (-1,3)
 (-3,3) -- (0,3)
 (-2,2) -- (0,2)
 (-1,1) -- (0,1);

\foreach \y  in {0,...,3}{
\pgfmathtruncatemacro\xmin{int(-1*\y)}
  \foreach \x in
    {\xmin,...,0}{
    \node[draw,circle,minimum size=6,inner sep=9, fill, color=white] at (\x,\y) {};
    \node[draw,circle,minimum size=6,inner sep=9, dashed] at (\x,\y) {};
  }
}

\node at (0,0) {{\bf P}};
\node[draw,thick, circle,minimum size=6,inner sep=9] at (0,0) {};
\node at (-1,1) {{\bf P}};
\node[draw,thick, circle,minimum size=6,inner sep=9] at (-1,1) {};
\node at (0,1) {{\bf N}};
\node[draw,thick, circle,minimum size=6,inner sep=9] at (0,1) {};
\node at (-2,2) {{\bf P}};
\node[draw,thick, circle,minimum size=6,inner sep=9] at (-2,2) {};
\node at (-1,2) {{\bf N}};
\node[draw,thick, circle,minimum size=6,inner sep=9] at (-1,2) {};
\node at (0,2) {{\bf P}};
\node[draw,thick, circle,minimum size=6,inner sep=9] at (0,2) {};
\node at (-1,3) {{\bf P}};
\node[draw,thick, circle,minimum size=6,inner sep=9] at (-1,3) {};
\node at (-2,3) {{\bf N}};
\node[draw,thick, circle,minimum size=6,inner sep=9] at (-2,3) {};
\node at (-3,3) {{\bf P}};
\node[draw,thick, circle,minimum size=6,inner sep=9] at (-3,3) {};
\end{tikzpicture}
\caption{Newton Diagram for Case 5.}
\label{fig:case5}
\end{figure}
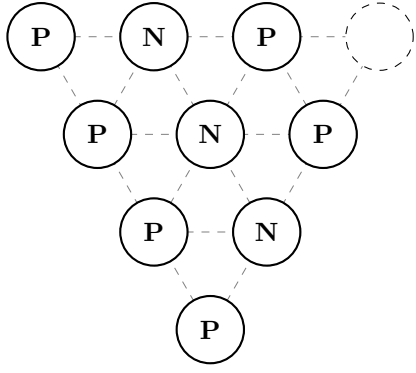

We have now treated all the cases, for it is not possible that $I_g$ contains all of (say) $x_1^2 x_2$, $x_1^2 x_3$, and $x_1 x_2 x_3$, for then $x_1^2 x_2 x_3$ would be in $(I_g)_4$ but not in $(I_f)_4$.  Thus in all cases in which the degree of $q$ is at most $3$, the number of negatives $N\geq 3$, and $(I_g)_4 \subseteq (I_f)_4$, the associated rank is at least $5$.  In light of the comments at the beginning of the section, the proof of Theorem \ref{thm:SOS for diagonal r in 3 var} is now complete.

\section{Sharp Degree Bounds for Proper Monomial Maps from $B_2$ to $B_K$}

We conclude this paper by recalling the connection between Theorem \ref{thm:LP for 3 variables} and the sharp degree estimate theorem for proper monomial mappings between the unit ball in $\mathbb{C}^2$ and the unit ball in $\mathbb{C}^k$.

A mapping $\phi \colon X \to Y$ between topological spaces is {\it proper} if $\phi^{-1}(E)$ is compact in $X$ whenever $E$ is compact in $Y$.  If $B_n$ denotes the unit ball in $\mathbb{C}^n$, if $\phi \colon B_n \to B_k$, and if $\phi$ extends smoothly past the boundary, then $\phi$ is proper if and only if it is non-constant and sends the boundary sphere $S^{2n-1}$ to $S^{2k-1}$, i.e., if
\begin{equation*}
\norm{\phi(z)}^2 = 1 \quad \text{whenever} \quad \norm{z}^2=1.
\end{equation*}
By a theorem of Forstneric \cite{FF:89}, a proper holomorphic map between balls that is sufficiently smooth on the boundary is in fact rational, and thus there is little loss in restricting attention to proper rational maps. It is a long-term goal in the field of CR geometry to classify all proper rational mappings from $B_n$ to $B_k$.

An interesting finding is that the complexity of such proper mappings is related to the domain and target dimensions $n$ and $k$. For example, Faran  \cite{JF:86} showed that for $n \leq k\leq 2n-2$, every proper rational map from $B_n$ to $B_k$ is of degree one, whereas if $k \leq 3$, every proper rational map from $B_2$ to $B_k$ is of degree at most three \cite{JF:82}. These are some of the earliest {\it degree estimate} results, i.e., theorems that bound the degree $d$ of a proper rational mapping from $B_n$ to $B_k$ in terms of some function of $n$ and $k$.  When all components of the mapping are monomials, the sharp degree estimate theorem is known:
\begin{theorem} [D'Angelo, Kos, and Riehl \cite{DKR:03}; Lebl and Peters \cite{LP:11,LP:12}]
\label{thm:deg est}
Let $\phi \colon B_n \to B_k$ be a proper monomial mapping of degree $d$. Then
\begin{equation}\label{eq: degre estimates}
d \leq \begin{cases}2k-3 & n=2\\
\frac{k-1}{n-1}  & n > 2 ,\end{cases}
\end{equation}
and the inequalities are sharp.
\end{theorem}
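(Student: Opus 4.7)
The plan is to exploit the standard homogenization dictionary between proper monomial maps and homogeneous polynomial identities of the form $p = s\tilde q$ in $n+1$ variables, then apply Theorem \ref{thm:LP for 3 variables} to obtain the sharp bound in the case $n = 2$. Writing the components of $\phi$ as $\phi_j(z) = c_j z^{a_j}$ and setting $d = \max_j |a_j|$, the substitution $x_j = |z_j|^2$ converts the sphere condition $\norm{\phi(z)}^2 = 1$ on $\norm{z}^2 = 1$ into the polynomial identity $f(x) := \sum_j |c_j|^2 x^{a_j} = 1$ on the hyperplane $\sum_{j=1}^n x_j = 1$, which by polynomial division gives $f - 1 = \bigl(\sum_{j=1}^n x_j - 1\bigr) q(x)$ with $\deg q = d-1$. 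Homogenizing by a new variable $x_{n+1}$ and then performing the sign-flip $x_{n+1} \mapsto -x_{n+1}$ produces the degree-$d$ homogeneous identity
\[ p(x) := F(x_1, \ldots, x_n, -x_{n+1}) - (-1)^d x_{n+1}^d = s(x)\,\tilde q(x), \]
where $s = \sum_{j=1}^{n+1} x_j$, $\tilde q$ has degree $d-1$, and a direct monomial count gives $\rho(p) \leq k + 1$.

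For the case $n = 2$, both $p$ and $\tilde q$ live in three variables, so Theorem \ref{thm:LP for 3 variables} is available. I would first verify $\pi(p) = d$: properness of $\phi$ forces its components to have no common factor $z_j$ (otherwise $\norm{\phi}^2$ would vanish on a coordinate subsphere of the unit sphere), so $F$ has no common monomial factor among $x_1, x_2$; meanwhile the explicit term $(-1)^d x_3^d$ in $p$ has nonzero coefficient (since any constant component of $\phi$ must satisfy $|c_0|^2 < 1$), ensuring that $x_3$ is not a common factor of $p$ either. If $\Gamma(\tilde q)$ is not connected, I would decompose $\tilde q = \sum \tilde q_j$ according to its connected components as in the proof of Theorem \ref{thm:SOS for diagonal r in 3 var}; since the supports of the $s\tilde q_j$ are disjoint, $\rho(p) = \sum_j \rho(s\tilde q_j)$, and applying Theorem \ref{thm:LP for 3 variables} to each piece yields the required bound $\rho(p) \geq (d+5)/2$. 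Combining with $\rho(p) \leq k + 1$ produces $d \leq 2k - 3$.

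For the case $n > 2$, Theorem \ref{thm:LP for 3 variables} no longer applies directly since $p$ lives in $n + 1 \geq 4$ variables, and I would invoke the classical argument of D'Angelo-Kos-Riehl---which uses a combinatorial analysis of the Newton diagram of $f$ in $n$ variables together with the $n = 2$ base case---to obtain $d \leq (k-1)/(n-1)$. Sharpness in both cases is established by exhibiting explicit extremal families: the D'Angelo one-parameter monomial maps achieve $d = 2k - 3$ for $n = 2$, and tensor powers of the Whitney map achieve $d = (k-1)/(n-1)$ for $n > 2$.

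The main obstacle is the $n = 2$ step: although the verification $\pi(p) = d$ is straightforward, the handling of disconnected $\Gamma(\tilde q)$ requires carefully matching the $\pi$-degree bookkeeping across components, so that the sum $\sum_j \rho(s\tilde q_j)$ indeed attains the lower bound $(d+5)/2$ rather than a weaker estimate. A secondary difficulty is the degenerate case in which $\phi$ contains a constant component, which slightly alters the monomial count for $\rho(p)$ and requires separate but routine analysis to ensure that $\rho(p) \leq k + 1$ and $\pi(p) = d$ remain valid.
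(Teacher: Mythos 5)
Your proposal follows the same overall route as the paper's Section~6: the dictionary between proper monomial maps and polynomial identities on the hyperplane $\sum x_j = 1$, homogenization with a sign flip to obtain $p = sq$ in $n+1$ variables with $\rho(p) \leq k+1$, and then (for $n=2$) an appeal to Theorem~\ref{thm:LP for 3 variables} to get $\rho(p) \geq (d+5)/2$ and hence $d \leq 2k-3$. The paper's treatment in Section~6 is deliberately terse and credits Lebl and Peters for establishing the connection, so the value of your write-up is in filling in the details the paper elides. However, several of those details are not quite right.

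First, your argument that $x_3$ (i.e., $x_{n+1}$) is not a common monomial factor of $p$ is backwards: the presence of the term $(-1)^d x_{n+1}^d$ shows that $x_1,\ldots,x_n$ cannot be common factors (since this monomial has zero degree in each of them), not that $x_{n+1}$ cannot. To rule out $x_{n+1}$ as a common factor you need a term of $p$ with $x_{n+1}$-degree zero, which comes from a top-degree component of $\phi$ — such a component exists precisely because $\deg\phi = d$. Second, your handling of a disconnected $\Gamma(\tilde q)$ has a genuine gap that you acknowledge but do not close: decomposing $\tilde q = \sum_j \tilde q_j$ and applying Theorem~\ref{thm:LP for 3 variables} to each piece gives $\rho(p) \geq \sum_j (\pi(s\tilde q_j)+5)/2$, but each $\pi(s\tilde q_j)$ can be strictly less than $d$ even when $\pi(p) = d$ (e.g., when every component has a nontrivial common monomial factor), so the sum need not dominate $(d+5)/2$. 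In fact Lebl and Peters show that the hypotheses of Theorem~\ref{thm: deg est homog form} already force $\Gamma(\tilde q)$ to be connected, which is the cleaner route and is what the paper implicitly relies on by declaring the hypotheses of Theorem~\ref{thm:LP for 3 variables} to be ``weaker.'' Third, the $n>2$ half of the inequality is due to Lebl and Peters (the analogue of Theorem~\ref{thm:LP for 3 variables} in more variables), not to D'Angelo--Kos--Riehl as your sketch states; D'Angelo--Kos--Riehl proved only the $n=2$ case.
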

D'Angelo, Kos, and Riehl \cite{DKR:03} prove the inequality for $n=2$ and show that it is sharp by exhibiting a family of mappings $\phi_d \colon B_2 \to B_k$ of odd degree for which $d=2k-3$.  Lebl and Peters prove the inequality for $n>2$. In the monomial case, this theorem follows from Theorem \ref{thm:LP for 3 variables} and its analogue for larger numbers of variables.  The purpose of this section is to briefly indicate the connection.  Lebl and Peters are responsible for making this connection.

Let $\phi \colon B_n \to B_k$ be a proper polynomial mapping.
If each component of $\phi$ is a monomial $C_a z^a$, then
we require
$$\norm{\phi(z)}^2=\sum |C_a|^2  \prod_{j=1}^n |z_j|^{2a_j} \quad\text{whenever} \quad \sum_{j=1}^n |z_j|^2=1.$$
Replace $(|z_1|^2,\ldots,|z_n|^2)$ with $(x_1,\ldots,x_n)$.  We see that there is a correspondence between proper monomial mappings and polynomials $\tilde{p}$ in $x=(x_1,\ldots,x_n)$ with non-negative coefficients such that
\begin{equation}\label{eq: def of class P}
\tilde{p}(x)=\sum c_a x^a = 1 \quad\text{whenever}\quad \sum_{j=1}^n x_j=1.
\end{equation}
We let $\sP$ denote the class of all polynomials in $\C[x_1,\ldots,x_n]$ satisfying \eqref{eq: def of class P}. It has been shown by D'Angelo, Kos, and Riehl \cite{DKR:03} for $n=2$ and by Lebl and Peters \cite{LP:11,LP:12} for $n>2$ that the degree estimates \eqref{eq: degre estimates} hold for elements of $\sP$, where $k=\rho(\tilde{p})$ is the number of distinct monomials appearing in $\tilde{p}$ with non-zero coefficient.  Furthermore, the estimates are sharp.

We prefer to deal with homogeneous polynomials.  Thus in
\eqref{eq: def of class P}, we homogenize with $x_{n+1}$ and then
replace $x_{n+1}$ with $-x_{n+1}$ to obtain
\begin{equation*}
p(x_1,\ldots,x_{n+1})=0 \quad \text{whenever}\quad \sum_{j=1}^{n+1} x_j =
0
\end{equation*}
where $p(x_1,\ldots,x_n, -1)=\tilde{p}(x_1,\ldots,x_n)-1$.  In other words,
if $\tilde{p} \in \sP$, the polynomial $p$ obtained by homogenizing
$\tilde{p}-1$ in this way must satisfy precisely the hypothesis on signs
appearing in the next theorem.  Thus
Theorem \ref{thm:deg est} and Theorem \ref{thm: deg est homog form} are
equivalent.

\begin{theorem}\label{thm: deg est homog form}
Let $p$ be homogeneous of degree $d$ with
$p(0, \ldots, 0, x_{n+1})=(-x_{n+1})^d$.  Suppose (i) the polynomial $\tilde{p}$
defined by $p(x_1,\ldots, x_n,-1)=\tilde{p}(x_1,\ldots,x_n)-1$ has all
non-negative coefficients and (ii) $p=sq$. Then
\begin{equation*}
\rho(p) \geq \begin{cases}\frac{d+5}{2} & n=2,\\
d(n-1)+2 & n>2.\end{cases}
\end{equation*}
and the inequalities are sharp.
\end{theorem}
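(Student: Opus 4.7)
The plan is to deduce Theorem \ref{thm: deg est homog form} by reducing it to the algebraic rank estimate Theorem \ref{thm:LP for 3 variables} (in the case $n=2$, where there are three variables $x_1,x_2,x_3$) and to the higher-dimensional analogue of Lebl and Peters (in the case $n>2$). The work lies entirely in verifying that the hypotheses here provide the needed inputs (connectedness of $\Gamma(q)$, computation of $\pi(p)$) to feed into those results.

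First I would record what the hypotheses tell us about the monomial structure of $p$. The condition $p(0,\dots,0,x_{n+1})=(-x_{n+1})^d$ immediately forces the monomial $x_{n+1}^d$ to appear in $p$ with nonzero coefficient $(-1)^d$, so $p$ is not divisible by any of $x_1,\dots,x_n$. The non-negativity of the coefficients of $\tilde{p}$, together with $p(x_1,\dots,x_n,-1)=\tilde{p}(x_1,\dots,x_n)-1$, determines the sign of the coefficient of any monomial $x^a x_{n+1}^{d-|a|}$ in $p$ as $(-1)^{d-|a|}$ times a non-negative number, with the $x_{n+1}^d$ term as the only negative contribution after the subtraction of $1$. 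Using this sign structure one verifies that $p$ has no common monomial factor, and hence $\pi(p)=d$: if $x_{n+1}^k$ divided $p$ for some $k\geq 1$, the surviving polynomial $p/x_{n+1}^k$ would still have to equal $sq'$ for some $q'$, but the corresponding $\tilde{p}$ piece would be forced to have non-positive coefficients in contradiction with hypothesis (i).

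Next I would dispense with the connectedness assumption needed to apply Theorem \ref{thm:LP for 3 variables}. If $\Gamma(q)$ is disconnected, write $q=q_1+\cdots+q_m$ where the $q_i$ are the connected components. As observed at the start of Section \ref{sec:finishing proof}, the sets of monomials of the $sq_i$ are pairwise disjoint, so
\begin{equation*}
\rho(p)=\sum_{i=1}^m \rho(sq_i).
\end{equation*}
Exactly one component, which we may call $sq_1$, contains the monomial $x_{n+1}^d$, and by the argument of the previous paragraph applied componentwise we have $\pi(sq_1)=d$ and $\Gamma(q_1)$ connected. For $n=2$, Theorem \ref{thm:LP for 3 variables} then gives $\rho(sq_1)\geq (d+5)/2$, and the remaining components only enlarge $\rho(p)$. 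For $n>2$, the analogous theorem of Lebl and Peters in $n+1$ variables gives $\rho(sq_1)\geq d(n-1)+2$ under the same setup, and the same component-wise argument concludes the bound. Sharpness is then witnessed by the extremal families already in the literature: the D'Angelo--Kos--Riehl odd-degree maps $\phi_d\colon B_2\to B_k$ with $d=2k-3$ for $n=2$, and the corresponding families of Lebl--Peters for $n>2$.

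The main obstacle is the verification that $\pi(p)=d$: ruling out the possibility $x_{n+1}\mid p$ requires genuine use of the sign pattern on the coefficients of $p$, and is the place where the hypothesis (i) on $\tilde{p}$ is really used (as opposed to the more obvious hypothesis that $p = sq$ with the $x_{n+1}^d$ monomial present). A secondary point is that for $n>2$ the higher-dimensional analogue of Theorem \ref{thm:LP for 3 variables} is not actually proved in this paper, so that portion of the argument is an honest appeal to \cite{LP:11,LP:12}; the genuinely new content here is the $n=2$ case, and even there the entire proof structure reduces to the new algebraic proof of Theorem \ref{thm:LP for 3 variables} given in Section \ref{sec:related ques}.
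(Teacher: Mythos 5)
The paper supplies no proof of this theorem: the surrounding text records that Theorem~\ref{thm: deg est homog form} is equivalent to the degree estimate Theorem~\ref{thm:deg est} (attributed to D'Angelo--Kos--Riehl and Lebl--Peters) and remarks that Theorem~\ref{thm:LP for 3 variables} has weaker hypotheses, but the reduction itself is never carried out. Your proposal is therefore an attempt to fill in an implicit step rather than a competing version of an argument appearing in the text, and the high-level plan (verify $\pi(p)=d$, handle disconnected $\Gamma(q)$ by components, then cite Theorem~\ref{thm:LP for 3 variables} for $n=2$ and the Lebl--Peters analogue for $n>2$) is indeed the right one.

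However, the step you yourself flag as the main obstacle --- that hypotheses (i)--(ii) force $\pi(p)=d$ --- is not a consequence of those hypotheses, and your informal sign argument does not hold up. Take $n=2$, $d=3$, $p=x_3^2(x_1+x_2+x_3)$, so $q=x_3^2$ and $\tilde p(x_1,x_2)=p(x_1,x_2,-1)+1=x_1+x_2$; (i) and (ii) hold, yet $\pi(p)=1<d$ and $\rho(p)=3<(d+5)/2=4$, so the conclusion fails. (This example satisfies $p(0,0,x_3)=x_3^3=-(-x_3)^3$. With the normalization literally printed in the statement, $p(0,\ldots,0,x_{n+1})=(-x_{n+1})^d$, one finds $\tilde p(0)=2$, while (ii) gives $\tilde p\equiv 1$ on $\sum_{j\le n}x_j=1$ and (i) gives $\tilde p(e_1)\geq\tilde p(0)$, a contradiction, so those hypotheses are unsatisfiable; either way the statement needs repair.) The missing hypothesis is $\deg\tilde p=d$, automatic when $\tilde p$ arises from a degree-$d$ proper monomial map (the intended application via Theorem~\ref{thm:deg est}) but not implied by (i)--(ii). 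And even granting that repair, your component argument needs a second look: $\pi(p)=d$ does not immediately give $\pi(sq_1)=d$ for the lone component $sq_1$ containing $x_{n+1}^d$, since the monomials witnessing that $p$ has no common factor may live in a different component of $\Gamma(q)$.
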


The hypotheses in Theorem \ref{thm: deg est homog form} are awkward, so Lebl and Peters sought weaker hypotheses under which the lower bound on $\rho(p)$ still holds. In three variables, their theorem is Theorem \ref{thm:LP for 3 variables} above.
Thus our new, more algebraic proof of Theorem \ref{thm:LP for 3 variables} constitutes a new proof of the sharp degree bound theorem for proper monomials mappings from $B_2$ to $B_k$.

\bibliographystyle{amsalpha}
\bibliography{references}

\end{document}